\documentclass[12pt,reqno]{amsart}

\usepackage{amsfonts}
\usepackage{amsmath,amssymb,mathtools}
\usepackage{microtype}
\usepackage[margin=1in]{geometry}
\usepackage{xcolor}
\usepackage{graphicx}
\usepackage{subcaption}
\usepackage{hyperref}

\hypersetup{
 colorlinks=true,
 linkcolor=blue!55!black,
 citecolor=blue!55!black,
 urlcolor=blue!55!black,
 pdftitle={Non-asymptotic Bounds for the Average Singular Value of a Complex Gaussian Matrix},
 pdfauthor={Lu\'{\i}s Daniel Abreu and Pratik Patil},
 pdfkeywords={complex Gaussian matrices, average singular value, three-term recurrence, continuous dual Hahn polynomials, Wishart matrices, Laguerre polynomials, dimension monotonicity, asymptotic expansion}
}

\newtheorem{theorem}{Theorem}
\newtheorem{proposition}[theorem]{Proposition}
\newtheorem{lemma}[theorem]{Lemma}
\newtheorem{corollary}[theorem]{Corollary}
\theoremstyle{remark}

\numberwithin{equation}{section}

\DeclareMathOperator{\Real}{Re}
\DeclareMathOperator{\tr}{tr}

\begin{document}

\title[Non-asymptotic bounds for the average singular value]{Non-asymptotic bounds
for the average singular value of a complex Gaussian matrix}

\author{Lu\'{\i}s Daniel Abreu}
\address{Faculty of Mathematics, University of Vienna, Vienna, Austria}
\email{abreuluisdaniel@gmail.com}
\author{Pratik Patil}
\address{Department of Statistics and Data Sciences, University of Texas at
Austin, Austin, USA}
\email{pratikpatil@utexas.edu}

\subjclass[2020]{Primary 60B20; Secondary 15A18, 15B52, 33C45}
\keywords{Complex Gaussian matrices, average singular value, three-term recurrence, continuous dual Hahn polynomials, Wishart matrices, Laguerre polynomials, dimension monotonicity}

\date{\today}

\begin{abstract}
Let $G_{d}$ be a $d \times d$ matrix with independent standard complex Gaussian entries, let $\alpha_{\mathbb{C}}(d)$ be the expected average singular value of $G_{d}/\sqrt{d}$, and set $\Delta_d := \alpha_{\mathbb{C}}(d)-\alpha_{\mathbb{C}}(d+1)$.
The statistic $\alpha_{\mathbb{C}}(d)$ admits a variational representation as an expected normalized maximum over the unitary group and governs approximation guarantees for the little Grothendieck problem over the unitary group and related unitary registration problems.
We obtain a strictly positive lower bound and an upper bound for $\Delta_d$, both valid in every dimension, together with corresponding bounds for $\alpha_{\mathbb{C}}(d)$ around the Marchenko--Pastur limit.
These bounds match the sharp leading behavior of complete asymptotic expansions for both quantities, whose coefficients are explicitly computable.
The proof combines a three-term recurrence for $Y_d = d^{3/2}\alpha_{\mathbb{C}}(d)$, obtained from its continuous dual Hahn representation, along with singularity analysis of the underlying Laguerre moment generating function.
\end{abstract}

\maketitle

\section{Introduction}

\subsection{Average singular value of a complex Gaussian matrix}

For each integer $d \geq 1$, let $G_{d}$ be a $d \times d$ random matrix with independent standard complex Gaussian entries, each having density $\pi^{-1}e^{-|z|^{2}}$ on $\mathbb{C}$ (so its mean is zero and its second absolute moment is one).
While the extremal singular values of these matrices have been studied extensively \cite{EdelmanSIAM}, our focus is the expected average singular value of the normalized matrix $G_{d}/\sqrt{d}$, defined as
\begin{equation}
\alpha_{\mathbb{C}}(d)
:= \mathbb{E}\left[
\frac{1}{d}\sum_{j = 1}^{d}\sigma_{j}\!\left(\frac{G_{d}}{\sqrt{d}}\right)
\right].
\label{eq:def-alpha}
\end{equation}

We first record a useful variational interpretation of this quantity.
Let $\mathcal{U}_{d}$ denote the group of $d \times d$ unitary matrices (that is, those satisfying $UU^{H} = U^{H}U = I_{d}$).
Given a singular value decomposition $A = V\Sigma W^{H}$, let $\mathcal{P}(A) := VW^{H}$ denote the corresponding unitary polar factor.
The case $m = 1$, $n = d$ of Ky Fan's maximum principle for singular values \cite[Theorem 1, formula (4)]{KyFan} implies that, for every matrix $A \in \mathbb{C}^{d\times d}$,
\begin{equation}
\sum_{k = 1}^{d}\sigma_{k}(A)
= \max_{U \in \mathcal{U}_{d}}\Real \tr(U^{H}A)
= \Real \tr\left(\mathcal{P}(A)^{H}A\right).
\label{KF}
\end{equation}
Applying (\ref{KF}) to $G_d$ and using (\ref{eq:def-alpha}), we obtain
\begin{equation}
\alpha_{\mathbb{C}}(d)
= \frac{1}{d\sqrt{d}}\,\mathbb{E}\!\left[
\max_{U \in \mathcal{U}_{d}}\Real \tr(U^{H}G_{d})
\right]
= \frac{1}{d\sqrt{d}}\,\mathbb{E}\!\left[
\Real \tr\left(\mathcal{P}(G_{d})^{H}G_{d}\right)
\right].
\label{eq:alpha-variational}
\end{equation}
Thus $\alpha_{\mathbb{C}}(d)$ is the expected normalized optimal value of a Gaussian linear functional over the unitary group.
Although we have not found this variational interpretation stated explicitly, it is implicit in the polar decomposition rounding procedure used by Bandeira, Kennedy, and Singer \cite[Algorithm 3 and Lemma 6]{bandeira2016approximating}, which replaces a Gaussian matrix by its unitary polar factor.
Related variational problems arise in Procrustes analysis \cite{Schonemann} and, with the orthogonal group in place of the unitary group, in global point cloud registration \cite{ChaudhuryKhooSinger}.

Both the value at $d = 1$ and the limit as $d \to \infty$ can be computed explicitly.
Direct polar integration gives $\alpha_{\mathbb{C}}(1) = \mathbb{E}|G_{1}| = \sqrt{\pi}/2$.
As $d \to \infty$, the Marchenko--Pastur law with parameter one, together with the corresponding convergence of the first moments, gives~\cite{marchenko1967distribution,cacciapuoti2013local,tulino2004random}:
\begin{equation}
\lim_{d \to \infty}\alpha_{\mathbb{C}}(d)
= \int_{0}^{4}\sqrt{x}\,\frac{1}{2\pi}\sqrt{\frac{4-x}{x}}\,dx
= \frac{8}{3\pi}.
\label{eq:alpha-limit}
\end{equation}
These endpoint values do not by themselves determine how $\alpha_{\mathbb{C}}(d)$ varies in finite dimensions.
Bandeira, Kennedy, and Singer conjectured \cite[Definition~2, p.~437]{bandeira2016approximating}\footnote{Their Gaussian matrix has independent complex $\mathcal{N}(0,1/d)$ entries and therefore has the same distribution as $G_{d}/\sqrt{d}$.} that $\alpha_{\mathbb{C}}(d)$ is strictly decreasing in $d$.
In our notation, this is the assertion that
\begin{equation*}
\Delta_{d} := \alpha_{\mathbb{C}}(d)-\alpha_{\mathbb{C}}(d+1) > 0.
\end{equation*}
They supported the conjecture with extensive numerical experiments, exact formulas in finite dimensions, and asymptotic bounds~\cite[Section~4, especially equation~(16), p.~447, and Appendix~2, pp.~463--467]{bandeira2016approximating}.
The conjecture was confirmed very recently by Hutn\'{\i}k \cite[Theorem~8.1]{hutnik2026average}.
Although not stated explicitly there, the estimates in the proof also imply the following sharp leading behavior:\footnote{The auxiliary quantity used there to control the sign has the expansion $\frac{\log d}{12\pi d^{2}}+O(d^{-2})$.
Combining equations~(4.3) and~(5.9) and the identity used in the proof of Theorem~8.1 in~\cite{hutnik2026average} with $\Gamma(d+3/2)/d! \sim d^{1/2}$ gives the displayed formula.}
\begin{equation*}
\Delta_d = \frac{\log d}{8\pi d^3}+O(d^{-3}).
\end{equation*}
The present paper complements this leading relation by determining complete asymptotic expansions for $\Delta_d$ and $\alpha_{\mathbb{C}}(d)$ and by providing explicit upper and lower bounds for both quantities in every dimension.
Our approach takes a different route, beginning with a three-term recurrence for $Y_d = d^{3/2}\alpha_{\mathbb{C}}(d)$.

\subsection{\texorpdfstring
{Upper and lower bounds for $\Delta_{d}$ and $\alpha_{\mathbb{C}}(d)$}
{Upper and lower bounds for the decrement and average singular value}}

To present the results we need some notation.
For real $x \geq 1$, set
\begin{equation}
\ell_{x} := \log x+\gamma +6\log 2,
\label{eq:ell-x-intro}
\end{equation}
where $\log$ denotes the natural logarithm and $\gamma$ is Euler's constant,
\begin{equation}
\gamma
:= \lim_{m \to \infty}\left(
\sum_{k = 1}^{m}\frac{1}{k}-\log m
\right)
= 0.5772156649\ldots.
\label{eq:Euler-constant}
\end{equation}
For an integer $d \geq 1$, write $H_d := \sum_{k = 1}^{d}k^{-1}$.
We obtain the following bounds for every $d \geq 1$, which are the central result of this paper.

\begin{theorem}
\label{thm:main}
For every integer $d \geq 1$,
\begin{equation}
0
< \frac{\ell_{d}-10/3}{8\pi d^{3/2}(d+1)^{3/2}}
< \Delta_{d}
< \frac{H_d+6\log 2-10/3}{8\pi d^{3/2}(d+1)^{3/2}}.
\label{eq:main-bound}
\end{equation}
\end{theorem}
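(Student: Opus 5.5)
The plan is to route everything through an exact expression for the decrement. First I write $\alpha_{\mathbb{C}}(d)$ in closed form using the Laguerre kernel. The expected empirical singular-value density of $G_d$ is the one-point function of the complex Wishart (Laguerre, $\alpha=0$) ensemble, so
\[
d^{3/2}\alpha_{\mathbb{C}}(d)=\int_0^\infty x^{1/2}\sum_{k=0}^{d-1}L_k(x)^2e^{-x}\,dx .
\]
Expanding $\int_0^\infty x^{1/2}L_k(x)^2e^{-x}\,dx$ as a hypergeometric (continuous dual Hahn) value yields the three-term recurrence for $Y_d=d^{3/2}\alpha_{\mathbb{C}}(d)$ mentioned in the introduction. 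Telescoping that recurrence should give a first-order relation between consecutive increments $Y_{d+1}-Y_d=c_d:=\int x^{1/2}L_d(x)^2e^{-x}\,dx$. The key identity I aim for is an exact representation
\[
\Delta_d=\frac{(d+1)^{3/2}Y_d-d^{3/2}Y_{d+1}}{d^{3/2}(d+1)^{3/2}},
\]
whose numerator $N_d:=(d+1)^{3/2}Y_d-d^{3/2}Y_{d+1}$ I will write as a finite sum of explicit Gamma ratios. This explains the common denominator $8\pi d^{3/2}(d+1)^{3/2}$ in \eqref{eq:main-bound}. What remains is the two-sided estimate
\[
\ell_d-\tfrac{10}{3}<8\pi N_d<H_d+6\log 2-\tfrac{10}{3}.
\]

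Next I identify $8\pi N_d$ as a partial sum of a series. Since $c_k$ has the closed form $\binom{2k}{k}^{2}$-type ratios times $\Gamma(3/2)$ factors, the singularity analysis of the generating function $\sum_k c_k z^k$ near $z=1$ (the Laguerre moment generating function, whose singular part is of the form $(1-z)^{-3/2}$ plus a logarithmic term) should show the following. One has $8\pi N_d=\sum_{k=1}^{d} a_k + b_d$, where $a_k=1/k+O(k^{-2})$ and $b_d$ is an explicit bounded correction, with limiting constant $\gamma+6\log 2-10/3$. The $6\log 2$ arises from the central binomial asymptotics $\binom{2k}{k}\sim 4^k/\sqrt{\pi k}$ appearing squared and cubed in the coefficients, and the $-10/3$ from the polynomial terms of the expansion.

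The bounds then follow from two monotonicity facts. The sequence $E_d:=8\pi N_d-H_d$ is increasing to $6\log 2-10/3$, which gives the upper bound strictly. The sequence $F_d:=8\pi N_d-\log d$ is decreasing to $\gamma+6\log 2-10/3$, which gives the lower bound strictly. To prove each, I will compute the one-step differences $E_{d+1}-E_d$ and $F_{d+1}-F_d$ exactly as $a_{d+1}-\frac1{d+1}$ plus a change in $b$, and compare them with $0$ and with $\log(1+1/d)$. Positivity of the left-hand expression in \eqref{eq:main-bound} is immediate, since $\ell_1=\gamma+6\log2>10/3$ (numerically $0.577+4.159=4.736$). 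Small $d$ can be checked directly from the exact values, starting with $\alpha_{\mathbb{C}}(1)=\sqrt\pi/2$.

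The main obstacle is the sign of those one-step differences. They are ratios of Gamma functions at half-integers minus $1/(d+1)$ or $\log(1+1/d)$, so they are of size $O(d^{-2})$ and cancel at leading order. I would handle them with Gautschi/Kershaw-type inequalities for $\Gamma(k+1/2)/\Gamma(k+1)$, which are sharp to $O(k^{-2})$ relative error, and with $\frac{1}{d+1/2}<\log(1+1/d)<\frac{2d+1}{2d(d+1)}$. If the generic inequalities are too weak for small $d$, I will verify the first several $d$ numerically from the recurrence and apply the inequalities only beyond that threshold.
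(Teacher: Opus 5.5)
\paragraph{Verdict.} Your overall architecture is right and coincides with the paper's. With $N_d=d^{3/2}(d+1)^{3/2}\Delta_d$, the paper shows that $8\pi N_d-\log d$ decreases strictly to $\gamma+6\log 2-10/3$ and $8\pi N_d-H_d$ increases strictly to $6\log 2-10/3$, with the limits identified from the asymptotic expansion. There is, however, a genuine gap at the step that carries all the weight: an exact, sign-controllable expression for the one-step difference $N_{d+1}-N_d$. Your route to it does not work, for two reasons.

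\paragraph{Why your route fails.} First, you obtain the decomposition $8\pi N_d=\sum_{k\le d}a_k+b_d$ from singularity analysis. The transfer theorem gives only asymptotics with non-explicit $O$-constants, so it cannot produce exact increments. For the same reason it gives no effective threshold, so your fallback of checking small $d$ numerically has no endpoint. Second, the increments are not Gamma ratios at half-integers. $Y_{d+1}-Y_d=\sqrt{\pi}\,u_d$ is a genuine convolution sum (already $u_2=145/128$, $u_3=687/512$), not a $\binom{2k}{k}^2$-type ratio, and $Y_d$ is a double sum. So Gautschi/Kershaw inequalities have nothing to act on. Moreover, the quantity to be signed is not of the form ``Gamma ratio minus $1/(d+1)$''.

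\paragraph{The missing idea.} Do not telescope the three-term recurrence; apply it at index $d+1$ directly to $N_{d+1}-N_d$. This gives the exact identity
\[
N_{d+1}-N_d=\bigl((d+2)^{3/2}+d^{3/2}\bigr)Y_{d+1}-(d+1)^{3/2}\bigl(Y_{d+2}+Y_d\bigr)=q_d\,Y_{d+1},
\]
where
\[
q_d=(d+2)^{3/2}-2(d+1)^{3/2}+d^{3/2}-\frac{3}{4\sqrt{d+1}}>0 .
\]
Positivity of $q_d$ follows from Jensen's inequality. Both monotonicity claims then become the product inequality
\[
\frac{1}{d+1}<8\pi q_dY_{d+1}<\log\frac{d+1}{d}.
\]

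\paragraph{How the paper closes this.} It uses four estimates, calibrated so that the products land exactly on the two endpoints.
\begin{enumerate}
\item $Y_{d+1}<\frac{8}{3\pi}(d+1)^{1/2}(d+3/2)$. This comes from bounding the convolution for $u_k$ termwise, using positivity of the coefficients of ${}_2F_1(-\frac12,-\frac12;1;z)$, Gauss's value $F(1)=4/\pi$, and log-convexity of $\Gamma$.
\item $q_d<\frac{3}{64\sqrt{d+1}\,(d+3/2)}\log\frac{d+1}{d}$, proved by a coefficient-by-coefficient series comparison.
\item $q_d>\frac{3}{64(d+1)^{5/2}}$.
\item $Y_{d+1}>\frac{8}{3\pi}(d+1)^{3/2}$.
\end{enumerate}
The last estimate is available only after the lower bound on $\Delta_d$ has been proved, which gives $\alpha_{\mathbb C}(d)\downarrow 8/(3\pi)$. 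So the argument must be ordered: lower bound first, then upper bound. None of these ingredients appears in your proposal. Without the identity $N_{d+1}-N_d=q_dY_{d+1}$, your scheme cannot be completed as written.
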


The difference between the upper and lower bounds in (\ref{eq:main-bound}) is itself bounded explicitly:
\begin{equation}
0
< \frac{H_d-\log d-\gamma}{8\pi d^{3/2}(d+1)^{3/2}}
< \frac{1}{16\pi d^{5/2}(d+1)^{3/2}}
< \frac{1}{16\pi d^4}.
\label{eq:delta-enclosure-width}
\end{equation}

In particular, $\alpha_{\mathbb{C}}(d)$ is strictly decreasing.
Both bounds in (\ref{eq:main-bound}) are asymptotic to $(\log d)/(8\pi d^{3})$, while (\ref{eq:delta-enclosure-width}) shows that their relative separation tends to zero.

Useful explicit bounds for $\alpha_{\mathbb{C}}\left(d\right)$ are harder to obtain directly.
In \cite[Theorem~7]{bandeira2016approximating} it was shown that $\frac{8}{3\pi}-\frac{5.05}{d} \leq \alpha_{\mathbb{C}}\left(d\right)$.
As observed in \cite{abreu2026recurrence}, since one can directly compute $\alpha_{\mathbb{C}}\left(1\right) = \sqrt{\frac{\pi}{4}}$, Hutn\'{\i}k's result together with (\ref{eq:alpha-limit}) implies, for all $d \geq 1$, the inequality
\begin{equation}
\frac{8}{3\pi} < \alpha_{\mathbb{C}}\left(d\right) \leq \sqrt{\frac{\pi}{4}}.
\label{sharpbounds}
\end{equation}
Although these endpoint bounds are optimal uniformly over $d \geq 1$, Theorem~\ref{thm:main} gives sharper bounds at each fixed dimension.
Set
\begin{equation}
\Lambda_{d}
:= \frac{1}{16\pi}
\left\{
\frac{\ell_{d}-10/3}{(d+1)^{2}}
+\log\left(1+\frac{1}{d}\right)
-\frac{1}{d+1}
\right\}
> 0.
\label{eq:Lambda-d}
\end{equation}
For the upper bound, set
\begin{equation}
\Omega_d
:= \frac{\ell_d-17/6}{16\pi d^2}
+\frac{3d+1}{32\pi d^3(2d+1)^2}.
\label{eq:Omega-d}
\end{equation}
This quantity is positive for every $d \geq 1$.
We then have the following improvement of (\ref{sharpbounds}).

\begin{corollary}
\label{CorAlpha}
For every integer $d \geq 1$,
\begin{equation}
\frac{8}{3\pi}+\Lambda_{d}
< \alpha_{\mathbb{C}}(d)
< \frac{8}{3\pi}+\Omega_d.
\label{CorAlphaBound}
\end{equation}
\end{corollary}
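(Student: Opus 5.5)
The plan is to telescope Theorem~\ref{thm:main}. Since $\alpha_{\mathbb{C}}(d)\to 8/(3\pi)$ by (\ref{eq:alpha-limit}), we have
\[
\alpha_{\mathbb{C}}(d)-\frac{8}{3\pi}=\sum_{k\ge d}\Delta_k .
\]
Inserting the two bounds of (\ref{eq:main-bound}) gives
\[
\sum_{k\ge d}\frac{\ell_k-10/3}{8\pi k^{3/2}(k+1)^{3/2}}
< \alpha_{\mathbb{C}}(d)-\frac{8}{3\pi}
< \sum_{k\ge d}\frac{H_k+6\log 2-10/3}{8\pi k^{3/2}(k+1)^{3/2}}.
\]
Both inequalities stay strict because every term is strict. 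What remains is to bound these two series explicitly, from below by $\Lambda_d$ and from above by $\Omega_d$, for every $d\ge1$.

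\textbf{Lower bound.} Use $k^{3/2}(k+1)^{3/2}<(k+1/2)^3$, which is AM--GM applied to $k(k+1)<(k+1/2)^2$, together with the fact that $k\mapsto \ell_k$ is increasing. The summand is therefore at least $(\ell_k-10/3)/(8\pi(k+1/2)^3)$. A simpler comparison is also available: since $k^{3/2}(k+1)^{3/2}\le (k+1)^3$ and $x\mapsto(\log x+c)/x^3$ is decreasing for large $x$, each term $f(k+1)$ dominates $\int_{k+1}^{k+2} f$, and the integral sum can be taken from $d+1$. In that form,
\[
\int_{d+1}^\infty \frac{\log x+c}{x^3}\,dx=\frac{\log(d+1)+c+1/2}{2(d+1)^2}, \qquad c=\gamma+6\log 2-10/3 .
\]
The target $\Lambda_d$ has exactly this flavour. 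It contains $(\ell_d-10/3)/(16\pi(d+1)^2)$ plus the correction
\[
\frac{1}{16\pi}\Bigl(\log\bigl(1+\tfrac1d\bigr)-\tfrac1{d+1}\Bigr),
\]
and that correction is what rewrites $\log(d+1)$ as $\log d$, up to the $1/(d+1)$ term. I would match terms in this way and verify that the leftover is nonnegative. If the monotonicity of $(\log x+c)/x^3$ fails for small $x$ (here $c\approx 1.39$, so the function decreases for $x\ge 1$), the first few $d$ can be checked numerically.

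\textbf{Upper bound.} Use $H_k\le \ell_k-6\log2+1/(2k)$, which follows from $H_k-\log k-\gamma<1/(2k)$ and is consistent with (\ref{eq:delta-enclosure-width}). The summand is then at most
\[
\frac{\log k+c+1/(2k)}{8\pi k^{3/2}(k+1)^{3/2}}.
\]
Next bound $k^{-3/2}(k+1)^{-3/2}$ above by a telescoping-friendly expression. Since $g(x)=(\log x+c')/x^3$ is convex and decreasing, one has $f(k)\le \int_{k-1/2}^{k+1/2}$ of a suitable comparison function. Equivalently, sum from $d$ using the integral $\int_{d-1/2}^\infty$, or the midpoint rule. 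This is where the term $(3d+1)/(32\pi d^3(2d+1)^2)$ should come from, via the factor $(2d+1)$, i.e.\ $d+1/2$. The main term $(\ell_d-17/6)/(16\pi d^2)$ corresponds to $\int_d^\infty(\log x+c)/x^3\,dx$ with its extra $+1/2$, since $-10/3+1/2=-17/6$. The first term $f(d)$ is then handled by the remaining explicit rational piece. Positivity of $\Omega_d$ and $\Lambda_d$ is a routine check.

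\textbf{Main obstacle.} I expect the main difficulty to be choosing comparison functions that reproduce exactly the stated rational correction terms in $\Lambda_d$ and $\Omega_d$ while keeping every inequality valid down to $d=1$. If a clean choice does not close at small $d$, I would verify $d\le 3$ directly from exact values of $\alpha_{\mathbb{C}}(d)$ and apply the integral comparison only for larger $d$.
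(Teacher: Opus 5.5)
\textbf{The upper bound has a genuine gap.} By (\ref{eq:alpha-leading-intro}), $\alpha_{\mathbb{C}}(d)-\frac{8}{3\pi}=\frac{\ell_d-17/6}{16\pi d^2}+O(d^{-4}\log d)$, whereas $\Omega_d-\frac{\ell_d-17/6}{16\pi d^2}=\frac{3d+1}{32\pi d^3(2d+1)^2}=O(d^{-4})$. So $\Omega_d$ leaves no room at order $d^{-3}$. Any comparison that loses a term of size $d^{-3}$ or $d^{-3}\log d$ fails for every large $d$, and checking $d\le 3$ numerically cannot repair that.

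The mechanisms you actually name all lose such terms:
\begin{itemize}
\item Bounding the tail by $\int_d^\infty(\log x+c)/x^3\,dx$ and charging the first term to the rational piece fails. The $k=d$ summand is about $(\log d+c)/(8\pi d^3)$, far larger than the $O(d^{-4})$ piece of $\Omega_d$.
\item Replacing the weight $k^{-3/2}(k+1)^{-3/2}$ by $k^{-3}$ before an integral or midpoint comparison costs about $\log d/(16\pi d^3)$.
\item A midpoint rule started at $d-\frac12$ produces factors $d-\frac12$. The $2d+1$ in $\Omega_d$ comes from a midpoint bound on $\sum_{k>d}k^{-3}$ started at $d+\frac12$.
\end{itemize}
Your phrase ``telescoping-friendly expression'' points the right way, but the proposal never supplies one.

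\textbf{The missing ingredient is the telescoping weight bound}
\[
\frac{1}{k^{3/2}(k+1)^{3/2}}<\frac12\left(\frac{1}{k^2}-\frac{1}{(k+1)^2}\right).
\]
It is equivalent to $2\sqrt{k(k+1)}<2k+1$, and it agrees with the weight through order $k^{-4}$. Keep $a_k=H_k+6\log 2-10/3$ exact and sum by parts. Since $a_k-a_{k-1}=1/k$, this gives
\[
\alpha_{\mathbb{C}}(d)-\frac{8}{3\pi}<\frac{1}{16\pi}\Bigl\{\frac{a_d}{d^2}+\sum_{k>d}k^{-3}\Bigr\}.
\]
Only then apply two estimates:
\begin{itemize}
\item $H_d<\log d+\gamma+\frac{1}{2d}$, at the single index $d$;
\item $\sum_{k>d}k^{-3}<\frac{1}{2(d+1/2)^2}$, from Jensen on each $[k-\frac12,k+\frac12]$.
\end{itemize}
The identity $\frac{1}{2d^3}+\frac{2}{(2d+1)^2}-\frac{1}{2d^2}=\frac{3d+1}{2d^3(2d+1)^2}$ then gives exactly $\Omega_d$. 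Your termwise replacement of $H_k$ by $\log k+\gamma+\frac{1}{2k}$ is compatible with this route. Its increments $\log\frac{k}{k-1}-\frac{1}{2k(k-1)}$ are still below $\frac1k$. The telescoping step is the one that cannot be skipped.

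\textbf{On the lower bound}, your route is essentially the paper's, but one step is misaligned. The inequality $k^{3/2}(k+1)^{3/2}<(k+1)^3$ only bounds the summand below by $(\log k+c)/(k+1)^3$. It does not give $f(k+1)=(\log(k+1)+c)/(k+1)^3$. Use $g(x)=(\log x+c)/(x+1)^3$ instead, which is decreasing on $[1,\infty)$ because $3c>2$. Then each summand exceeds $g(k)>\int_k^{k+1}g$. Integration by parts gives $\frac{1}{8\pi}\int_d^\infty g=\Lambda_d$ exactly, so there is no leftover to check.
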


The bounds are derived from Theorem~\ref{thm:main} in Section~\ref{sec:consequences}.
Together, these bounds quantify the rate at which $\alpha_{\mathbb C}(d)$ approaches its limit~(\ref{eq:alpha-limit}) from above.
Figure~\ref{fig:quantitative-bounds} compares the exact quantities with these bounds and the leading asymptotic approximation.\footnote{At $d = 1$, the exact value $\alpha_{\mathbb C}(1) = \sqrt{\pi}/2$ is slightly sharper than the upper bound in Corollary~\ref{CorAlpha} and is used in panel~\protect\subref{fig:alpha-bounds}.}

\begin{figure}[t]
 \centering
 \begin{subfigure}[t]{0.49\textwidth}
  \centering
  \includegraphics[width=\linewidth]{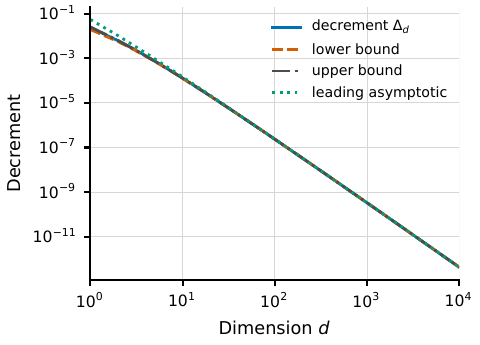}
  \caption{The decrement and its estimates.}
  \label{fig:decrement-bounds}
 \end{subfigure}
 \hfill
 \begin{subfigure}[t]{0.49\textwidth}
  \centering
  \includegraphics[width=\linewidth]{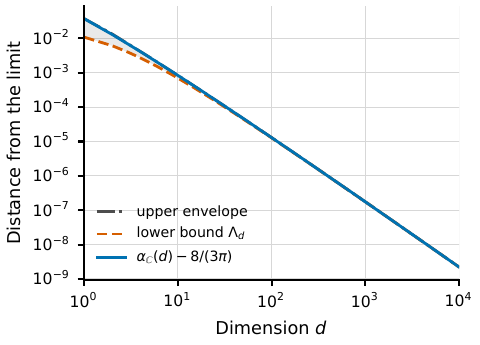}
  \caption{The envelope obtained from Corollary~\ref{CorAlpha} and the exact value at $d = 1$, after centering at the limit.}
  \label{fig:alpha-bounds}
 \end{subfigure}
 \caption{Numerical illustration of the non-asymptotic bounds for $1 \leq d \leq 10{,}000$.
 Panel~\protect\subref{fig:decrement-bounds} compares $\Delta_d$ with both bounds in Theorem~\ref{thm:main} and the leading asymptotic term $(\ell_d-10/3)/(8\pi d^3)$.
 Panel~\protect\subref{fig:alpha-bounds} shows $\alpha_{\mathbb C}(d)-8/(3\pi)$ inside the shaded envelope from Corollary~\ref{CorAlpha}, with the exact value used at $d = 1$.}
 \label{fig:quantitative-bounds}
\end{figure}

\subsection{Three-term recurrence}

Our results rely on the following connection with continuous dual Hahn polynomials.
Cunden, Mezzadri, O'Connell, and Simm identify the matrix moment underlying $\alpha_{\mathbb{C}}(d)$ with a continuous dual Hahn polynomial~\cite[Theorem~4.4 and equation~(4.12), p.~1107]{cunden2019moments}.
Together with the standard recurrence for this polynomial family, this identification gives a closed recurrence for the rescaled statistic
\begin{equation}
Y_{d} := d^{3/2}\alpha_{\mathbb{C}}(d),\qquad Y_{0} := 0.
\label{eq:Y-intro}
\end{equation}
We record the resulting recurrence here; Subsection~\ref{sec:dual-hahn-identification} gives the exact identification and normalization.

\begin{proposition}
\label{prop:structure}
The following three-term recurrence holds, with initial values $Y_{0} = 0$ and $Y_{1} = \sqrt{\pi}/2$:
\begin{equation}
Y_{d+1} = \left(2+\frac{3}{4d^{2}}\right) Y_{d}-Y_{d-1},\qquad d \geq 1.
\label{eq:Y-recurrence-intro}
\end{equation}
\end{proposition}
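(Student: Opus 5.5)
The plan is to obtain \eqref{eq:Y-recurrence-intro} from the three-term recurrence of the continuous dual Hahn polynomials, using the identification of Cunden, Mezzadri, O'Connell, and Simm as a black box.

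First, I would rewrite $Y_d$ as a matrix moment. By \eqref{eq:def-alpha}, $\alpha_{\mathbb C}(d)=d^{-3/2}\,\mathbb{E}\tr (G_d^HG_d)^{1/2}$. Hence
\begin{equation*}
Y_d=\mathbb{E}\tr W_d^{1/2},\qquad W_d:=G_d^HG_d,
\end{equation*}
where $W_d$ is a square complex Wishart (Laguerre unitary ensemble) matrix. The convention $Y_0=0$ is natural: the trace over an empty matrix vanishes. The value $Y_1=\mathbb{E}|G_1|=\sqrt{\pi}/2$ was recorded above.

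Next, I would invoke \cite[Theorem~4.4 and (4.12)]{cunden2019moments}. For the square LUE this expresses the moment $\mathbb{E}\tr W_d^{s}$, at complex order $s$, as an explicit Gamma-function prefactor times a continuous dual Hahn polynomial $S_{d-1}(x^2;a,b,c)$. Here the parameters $a,b,c$ depend only on the ensemble, and the variable $x$ is tied to $s$ (something like $x^2=-(s+\tfrac12)^2$ up to the paper's conventions). I would specialize to $s=1/2$ and write $Y_d=\kappa_d\,S_{d-1}(x_*^2;a,b,c)$ with an explicit prefactor $\kappa_d$. I would then insert this into the standard recurrence
\begin{equation*}
-(a^2+x^2)\,\widetilde S_n=A_n\widetilde S_{n+1}-(A_n+C_n)\widetilde S_n+C_n\widetilde S_{n-1},
\end{equation*}
where $\widetilde S_n$ is the monic-type normalization $S_n/\bigl((a+b)_n(a+c)_n\bigr)$, $A_n=(n+a+b)(n+a+c)$ and $C_n=n(n+b+c-1)$. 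Rewriting everything in terms of $Y_{n}$, $Y_{n+1}$, $Y_{n+2}$ should reduce, after the ratios $\kappa_{d\pm1}/\kappa_d$ and the Pochhammer normalizations cancel against $A_n$ and $C_n$, to an identity of the form $Y_{d+1}+Y_{d-1}=\beta_dY_d$. The claim is then that $\beta_d=2+\tfrac{3}{4d^2}$, with the $3/4$ arising as $a^2+x_*^2$ evaluated at $s=1/2$ (for instance $1-(1/2)^2$). For $d=1$, the term $Y_0=0$ is consistent with the vanishing factor $C_0=0$ at $n=0$.

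As an independent sanity check, I would verify the first case directly. Writing $Y_{d+1}-Y_d=\int_0^\infty x^{1/2}L_d(x)^2e^{-x}\,dx$ via the Christoffel--Darboux kernel, the value $Y_2=\tfrac{11}{4}Y_1$ can be computed from this Laguerre integral; this confirms the recurrence at $d=1$.

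The main obstacle is the bookkeeping of normalizations in the second step. The parametrization $(a,b,c,x)$, the Gamma prefactor $\kappa_d$, and the choice between $S_n$ and the normalized $\widetilde S_n$ must all be matched exactly; otherwise the coefficient comes out as a rational function of $d$ other than $2+3/(4d^2)$. I would first do the computation symbolically in $s$, checking that for general $s$ one gets $Y_{d+1}+Y_{d-1}=\bigl(2+c(s)/d^2\bigr)Y_d$ with $c(0)=0$. This is a check because for $s=0$ we have $\mathbb{E}\tr W_d^0=d$, which is linear in $d$. I would then set $s=1/2$ to read off $c(1/2)=3/4$.
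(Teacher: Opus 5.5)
Your plan is essentially the paper's proof: the Cunden--Mezzadri--O'Connell--Simm identification, in the square case $\mathcal{Y}_n(\mu)=n^{2}\Gamma(\mu+1)\,p_{n-1}(\mu)$ with continuous dual Hahn parameters $(a,b,c)=(3/2,1/2,1/2)$ and argument $x^2=-(\mu+1/2)^2$, is inserted into the standard continuous dual Hahn recurrence, where $A_n=(n+2)^2$ and $C_n=n^2$ cancel exactly against the prefactor $n^2$. One correction to your heuristic for the constant: $a^2+x_*^2=2-\mu(\mu+1)$, which is $5/4$ (not $3/4$) at $\mu=1/2$, and the coefficient $3/4=\mu(\mu+1)$ appears only after combining this with $A_n+C_n=2(n+1)^2+2$, which yields $\mathcal{Y}_{n+1}-2\mathcal{Y}_n+\mathcal{Y}_{n-1}=\mu(\mu+1)\mathcal{Y}_n/n^2$ (consistent with your check $c(0)=0$).
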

Once $Y_{0}$ and $Y_{1}$ are known, this recurrence computes the entire sequence and is used in both the asymptotic analysis and the bounds valid in every dimension.

\subsection{Complete asymptotic expansions}

The following result provides complete asymptotic expansions for $\alpha_{\mathbb{C}}(d)$ and $\Delta_d$, with all coefficients explicitly computable.
These expansions are an essential ingredient in the proof of Theorem~\ref{thm:main}.

\begin{theorem}
\label{thm:all-orders}
There exist uniquely determined sequences of recursively computable constants $(P_{j},Q_{j})_{j \geq 1}$ and $(C_{j},D_{j})_{j \geq 0}$, with the following properties.
For every integer $M \geq 0$, as $d \to \infty$,
\begin{equation}
\alpha_{\mathbb{C}}(d)
= \frac{8}{3\pi}
+\sum_{j = 1}^{M}\frac{P_{j}\log d+Q_{j}}{d^{2j}}
+O_{M}\!\left(\frac{\log d}{d^{2M+2}}\right).
\label{eq:alpha-all-orders}
\end{equation}
Moreover, for every integer $M \geq 0$, as $d \to \infty$,
\begin{equation}
\Delta_{d}
:= \alpha_{\mathbb{C}}(d)-\alpha_{\mathbb{C}}(d+1)
= \sum_{j = 0}^{M}\frac{C_{j}\log d+D_{j}}{d^{j+3}}
+O_{M}\!\left(\frac{\log d}{d^{M+4}}\right).
\label{eq:delta-all-orders}
\end{equation}
\end{theorem}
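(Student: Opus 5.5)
The proof goes through the generating function of $Y_d$. Write $Y_d=d^{3/2}\alpha_{\mathbb{C}}(d)$ and set $F(z):=\sum_{d\ge1}Y_d z^d$. Proposition~\ref{prop:structure} is equivalent to $d^2(Y_{d+1}-2Y_d+Y_{d-1})=\tfrac34 Y_d$. Multiplying by $z^d$ and summing turns this into a second-order linear ODE for $(1-z)^2F(z)/z$ under the Euler operator $\theta=z\,d/dz$:
\begin{equation*}
\theta^2\!\left[\frac{(1-z)^2}{z}F(z)\right]=\tfrac34 F(z),
\end{equation*}
up to boundary terms fixed by $Y_0=0$ and $Y_1=\sqrt{\pi}/2$. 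Equivalently, $F$ is, up to an elementary factor, the Laguerre moment generating function mentioned in the abstract. That function can be written as a Gauss hypergeometric function in $z$. The equation has regular singular points at $0$, $1$ and $\infty$, so it is hypergeometric. The relevant singularity is at $z=1$.

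\emph{Step 1 (local behaviour at $z=1$).} Compute the indicial exponents at $z=1$. Since $Y_d\sim\frac{8}{3\pi}d^{3/2}$, we expect $F(z)\sim\frac{8}{3\pi}\Gamma(5/2)(1-z)^{-5/2}$. The exponents at $z=1$ should therefore differ by an integer, and this integer difference is the source of the $\log d$ terms. Using the connection formula for ${}_2F_1$ at $z=1$ in the integer-difference (logarithmic) case, I would write
\begin{equation*}
F(z)=(1-z)^{-5/2}A(1-z)+B(1-z)\log(1-z)+E(z),
\end{equation*}
where $A$ is a power series, $B$ is a power series in $1-z$ times a fixed half-integer power, and $E$ is analytic in a disk of radius $>1$. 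The coefficients of $A$ and $B$ come recursively from the Frobenius recursion. The leading coefficient of $A$ is matched to the constant $8/(3\pi)$ in \eqref{eq:alpha-limit}, and the connection constants come from the Gamma-function evaluations.

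\emph{Step 2 (transfer).} Apply Flajolet--Odlyzko singularity analysis. The function $F$ is $\Delta$-analytic, since the only finite singularities of the equation are $0$ and $1$. Each term $(1-z)^{\beta}$ with $\beta\notin\mathbb{Z}_{\ge0}$ transfers to $\frac{d^{-\beta-1}}{\Gamma(-\beta)}\sum_k e_k(\beta)d^{-k}$. Each term $(1-z)^{\beta}\log(1-z)$ transfers to an analogous expansion with an additional $\log d$ factor, coming from the derivative in $\beta$ of $1/\Gamma(-\beta)$. Dividing by $d^{3/2}$ then gives an expansion of $\alpha_{\mathbb{C}}(d)$ in powers $d^{-k}$ and $d^{-k}\log d$, with explicit error $O_M(d^{-2M-2}\log d)$. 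The coefficients are uniquely determined, being the coefficients of an asymptotic scale.

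\emph{Step 3 (only even powers).} The expansion \eqref{eq:alpha-all-orders} contains only even powers $d^{-2j}$. I would obtain this without tracking the Gamma expansions by a formal argument. Substitute a general ansatz $Y_d\sim d^{3/2}\sum_k (p_k\log d+q_k)d^{-k}$ into the recurrence and expand $Y_{d\pm1}$ by Taylor's formula. The left side $Y_{d+1}+Y_{d-1}-2Y_d$ is even in the shift, so it involves only even derivatives. The resulting triangular system then forces $p_k=q_k=0$ for odd $k$, once the leading data are fixed by Step~2. This step also shows the coefficients are recursively computable.

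\emph{Step 4 ($\Delta_d$).} Expand $\alpha_{\mathbb{C}}(d+1)$ by writing $\log(d+1)=\log d+\log(1+1/d)$ and $(d+1)^{-2j}=d^{-2j}(1+1/d)^{-2j}$, then subtract. The constant term cancels. The $j=1$ term yields $\Delta_d=(2P_1\log d+2Q_1-P_1)d^{-3}+\cdots$, which gives \eqref{eq:delta-all-orders} with $C_0=2P_1$. The error $O(d^{-2M-2}\log d)$ from \eqref{eq:alpha-all-orders} is too crude for $\Delta_d$. So I would apply \eqref{eq:alpha-all-orders} with $M$ replaced by a larger $M'$, chosen so that $2M'+2>M+4$, which makes the truncation error $o(d^{-M-4}\log d)$.

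\emph{Main obstacle.} The hardest part is Step~1: pinning down the logarithmic singular term exactly, including its constant, and verifying $\Delta$-analyticity. As a cross-check on the leading coefficient, I would compare the result with Hutn\'{\i}k's leading relation $\Delta_d\sim\log d/(8\pi d^3)$, which forces $P_1=1/(16\pi)$.
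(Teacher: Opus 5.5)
Your proposal is correct and follows essentially the paper's route: the logarithmic-case expansion of $z(1-z)^{-5/2}\,{}_2F_1(-\tfrac12,-\tfrac12;1;z)$ at $z=1$ (exponents $-5/2$ and $-1/2$), transfer via singularity analysis, the even-shift Taylor argument with the recurrence to kill the odd powers, and subtraction of a sufficiently long expansion to obtain $\Delta_d$. The only variation is that you reach the generating function through the ODE $\theta^2[(1-z)^2F/z]=\tfrac34 F$ implied by the recurrence (it holds exactly, with no boundary terms) rather than through the Laguerre convolution. One small correction: in the transfer step, the extra $\log d$ comes from differentiating $d^{-\beta-1}$, i.e.\ from the term $-\psi(d-\beta)$, not from differentiating $1/\Gamma(-\beta)$.
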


The proof of Theorem~\ref{thm:main} requires the coefficients in (\ref{eq:alpha-all-orders}) for $M = 1$ and those in (\ref{eq:delta-all-orders}) for $M = 0$.
Computing these constants gives, as $d \to \infty$,
\begin{align}
\alpha_{\mathbb{C}}(d)
& = \frac{8}{3\pi}
+\frac{\ell_{d}-17/6}{16\pi d^{2}}
+O\!\left(\frac{\log d}{d^{4}}\right),
\label{eq:alpha-leading-intro} \\
\Delta_{d}
& = \frac{\ell_{d}-10/3}{8\pi d^{3}}
+O\!\left(\frac{\log d}{d^{4}}\right).
\label{eq:delta-leading-intro}
\end{align}

\subsection{Application to the little Grothendieck problem over the unitary group}

Given a positive semidefinite block matrix $C \in \mathbb{C}^{dn\times dn}$, with blocks $C_{ij} \in \mathbb{C}^{d\times d}$, the problem is to find
\begin{equation}
\max_{U_{1},\ldots,U_{n} \in \mathcal{U}_{d}}
\sum_{i = 1}^{n}\sum_{j = 1}^{n}
\tr\left(C_{ij}^{H}U_{i}U_{j}^{H}\right).
\label{eq:unitary-little-Grothendieck}
\end{equation}

The variational representation (\ref{eq:alpha-variational}) comes in handy to explain the appearance of $\alpha_{\mathbb{C}}(d)$ in the little Grothendieck problem over the unitary group considered in \cite{bandeira2016approximating}, the context that originally motivated the conjecture.
Bandeira, Kennedy, and Singer first replace the unitary matrices in (\ref{eq:unitary-little-Grothendieck}) by matrices $Y_{i}$ satisfying $Y_{i}Y_{i}^{H} = I_{d}$.
They then multiply the $Y_{i}$ by a common normalized Gaussian matrix $R$ and replace each resulting square matrix $Y_{i}R$ by its unitary polar factor $\mathcal{P}(Y_{i}R)$.
Since $Y_{i}R$ has the same distribution as $G_{d}/\sqrt{d}$, formula (\ref{eq:alpha-variational}) shows that $\alpha_{\mathbb{C}}(d)$ measures the expected quality of this replacement.
They then use the positive semidefiniteness of $C$ to prove that the expected objective value of the rounded solution is at least $\alpha_{\mathbb{C}}(d)^{2}$ times the optimum.
Thus, the dimension dependence of $\alpha_{\mathbb{C}}(d)$ determines how this guarantee changes with $d$.
To rephrase our results in this setting, observe that using the lower bound in (\ref{CorAlphaBound}) twice, with $d$ and $d+1$, gives
\begin{equation*}
\alpha_{\mathbb{C}}(d)+\alpha_{\mathbb{C}}(d+1)
> \frac{16}{3\pi}+\Lambda_{d}+\Lambda_{d+1}.
\end{equation*}
The difference between successive approximation ratios factors as
\begin{equation*}
\alpha_{\mathbb{C}}(d)^{2}-\alpha_{\mathbb{C}}(d+1)^{2}
= \left(\alpha_{\mathbb{C}}(d)+\alpha_{\mathbb{C}}(d+1)\right) \Delta_{d}.
\end{equation*}
Combining this identity with (\ref{eq:main-bound}) and the preceding lower bound for the sum gives
\begin{equation}
\alpha_{\mathbb{C}}(d)^{2}-\alpha_{\mathbb{C}}(d+1)^{2}
> \left(\frac{16}{3\pi}+\Lambda_{d}+\Lambda_{d+1}\right)
\frac{\ell_{d}-10/3}{8\pi d^{3/2}(d+1)^{3/2}}
> 0.
\label{eq:cut-finite-decrement}
\end{equation}
The approximation ratio itself satisfies, from (\ref{CorAlphaBound}),
\begin{equation}
\left(\frac{8}{3\pi}+\Lambda_{d}\right) ^{2}
< \alpha_{\mathbb{C}}(d)^{2}
< \left(\frac{8}{3\pi}+\Omega_d\right)^2.
\label{eq:cut-improved-bounds}
\end{equation}

\subsection{Related work}
\label{sec:related-work}

Earlier work of the first author derived a relation for $\alpha_{\mathbb{C}}(d+1)-\alpha_{\mathbb{C}}(d)$ involving two explicit Laguerre integrals and isolated the sign condition that remained to be proved~\cite[Proposition~1 and equations~(1.6)--(1.9)]{abreu2026recurrence}.
Hutn{\'{\i}}k used this relation to prove strict decrease in the square case and, more generally, for $d\times(d+\lambda)$ complex Gaussian matrices with every fixed nonnegative integer $\lambda$~\cite[Theorem~8.1, pp.~10--11, and Theorem~B.4, pp.~27--28]{hutnik2026average}.

The present paper grew from a proof that $\alpha_{\mathbb{C}}(d)-\alpha_{\mathbb{C}}(d+1) > 0$ developed by the second author independently of Hutn{\'{\i}}k's manuscript and by different methods.
That proof used the recurrence in Proposition~\ref{prop:structure}, while the continuous dual Hahn identification of Cunden, Mezzadri, O'Connell, and Simm provides a conceptual source for the same recurrence.
Subsequent discussions led to the present joint work, which derives sharp upper and lower bounds valid in every dimension and complete asymptotic expansions from this structure.

While finalizing this paper, we learned of a very recent preprint by Baslingker and Dan~\cite{baslingker2026monotonicity}.
Their argument likewise specializes the general Laguerre moment recurrence associated with the continuous dual Hahn correspondence to give another proof of monotonicity, and it extends the result to every real nonnegative shape parameter.
That paper does not pursue the complete asymptotic expansions or the bounds valid in every dimension obtained here.

A separate recent preprint by Hutn{\'{\i}}k proves that the real average singular value increases with dimension~\cite{hutnik2026real}, as conjectured by Bandeira, Kennedy, and Singer~\cite{bandeira2016approximating}.

\subsection{Outline of the paper}

Section~\ref{sec:laguerre-recurrence} expresses the statistic through Laguerre moments, uses the continuous dual Hahn representation of these moments to obtain Proposition~\ref{prop:structure}, and derives the hypergeometric generating function underlying the asymptotic analysis.
Section~\ref{sec:asymptotic-expansion} expands this generating function at $z = 1$, extracts its coefficients, and uses the recurrence to explain why only even inverse powers occur in the expansion of $\alpha_{\mathbb{C}}(d)$.
Section~\ref{sec:sharp-decrement} then proves Theorem~\ref{thm:main}, and Section~\ref{sec:consequences} derives Corollary~\ref{CorAlpha} by summing the upper and lower bounds for the decrement.

\section{Laguerre moments: recurrence and generating function}

\label{sec:laguerre-recurrence}

We use the following convention for the generalized hypergeometric series with complex upper parameters $a_{1},\ldots,a_{p}$ and complex lower parameters $b_{1},\ldots,b_{q}$, where no lower parameter is a nonpositive integer \cite{AAR,szego1975orthogonal,nist2026digital}.
Here $(a)_{0} := 1$ and $(a)_{n} := a(a+1)\cdots(a+n-1)$ for $n \geq 1$:

\begin{equation*}
{}_{p}F_{q}\left(
\begin{array}{c}
a_{1},a_{2},...,a_{p} \\
b_{1},...,b_{q}
\end{array}
;z
\right)
= \sum_{n = 0}^{\infty}
\frac{(a_{1})_{n}(a_{2})_{n}\cdots(a_{p})_{n}}{(b_{1})_{n}(b_{2})_{n}\cdots(b_{q})_{n}}
\frac{z^{n}}{n!}.
\end{equation*}
For complex parameters $a,b,c$ such that $c$, $c-a$, and $c-b$ are not nonpositive integers and $\Real(c-a-b) > 0$, Gauss's summation formula \cite[DLMF 15.4.20]{nist2026digital} states
\begin{equation}
_{2}F_{1}\left(
\begin{array}{c}
a,b \\
c
\end{array}
;1
\right)
= \frac{\Gamma (c)\Gamma (c-a-b)}{\Gamma (c-a)\Gamma (c-b)}.
\label{Gauss}
\end{equation}
For $\alpha \geq 0$, the generalized Laguerre polynomial is defined by
\begin{equation*}
L_{n}^{(\alpha)}(x)
= \sum_{k = 0}^{n}(-1)^{k}\binom{n+\alpha}{n-k}\frac{x^{k}}{k!}.
\end{equation*}
With this normalization it satisfies the orthogonality relation
\begin{equation*}
\int_{0}^{\infty}x^{\alpha}e^{-x}L_{j}^{(\alpha)}(x)L_{k}^{(\alpha)}(x)\,dx
= \frac{\Gamma (k+\alpha +1)}{k!}\delta_{j,k}.
\end{equation*}

\subsection{Singular value density}

Let $L_{n} = L_{n}^{(0)}$ denote the ordinary Laguerre polynomial.
With the Gaussian normalization in (\ref{eq:def-alpha}), let $\lambda_{1},\ldots,\lambda_{d}$ denote the eigenvalues of $G_{d}G_{d}^{\ast}$.
They form the square Laguerre unitary ensemble with weight $e^{-x}$, whose standard kernel constructed from orthogonal polynomials gives the expected eigenvalue density~\cite[Section~3.2, pp.~90--97]{forrester2010log}
\begin{equation}
\rho_{d}(x) = e^{-x}\sum_{k = 0}^{d-1}L_{k}(x)^{2},\qquad x > 0.
\label{eq:lue-density}
\end{equation}
The normalization of $\rho_{d}$ means that
\begin{equation*}
\mathbb{E}\sum_{j = 1}^{d}f(\lambda_{j})
= \int_{0}^{\infty}f(x)\rho_{d}(x)\,dx
\end{equation*}
for every nonnegative Borel function $f$.
Since $\sigma_{j}(G_{d}/\sqrt{d}) = \sqrt{\lambda_{j}/d}$, the singular values are obtained from $f(x) = \sqrt{x}$.
Thus,
\begin{equation}
\alpha_{\mathbb{C}}(d) = \frac{Y_{d}}{d^{3/2}},
\qquad
\frac{Y_{d}}{\sqrt{\pi}} := \sum_{k = 0}^{d-1}u_{k},
\qquad
u_{k} := \frac{1}{\sqrt{\pi}}\int_{0}^{\infty}x^{1/2}e^{-x}L_{k}(x)^{2}\,dx.
\label{eq:alpha-Y}
\end{equation}
This is the representation used in~\cite[equation~(16), p.~447]{bandeira2016approximating} and~\cite[equations~(2.1)--(2.2)]{abreu2026recurrence}.

\subsection{Continuous dual Hahn identification and proof of Proposition~\protect\ref{prop:structure}}

\label{sec:dual-hahn-identification}
For a complex moment order with $\Real \mu > -1$, define the square Laguerre unitary ensemble moment
\begin{equation*}
\mathcal{Y}_{n}(\mu)
:= \mathbb{E}\operatorname{Tr}(G_{n}G_{n}^{\ast})^{\mu}
= \sum_{k = 0}^{n-1}\int_{0}^{\infty}x^{\mu}e^{-x}L_{k}(x)^{2}\,dx,
\qquad n \geq 1,
\end{equation*}
and set $\mathcal{Y}_{0}(\mu) := 0$.
Equation~(\ref{eq:alpha-Y}) gives $\mathcal{Y}_{n}(1/2) = Y_{n}$.

For $j \geq 0$, set
\begin{equation}
p_{j}(\mu)
:= {}_{3}F_{2}\!\left(
\begin{matrix}
-j, 1-\mu, 2+\mu \\
2, 2
\end{matrix}
;1
\right)
= \frac{\mathsf{S}_{j}^{\mathrm{CDH}}\!\left(-(\mu +1/2)^{2};3/2,1/2,1/2\right)}{(2)_{j}^{2}},
\label{eq:specialized-cdh}
\end{equation}
where the second equality is the continuous dual Hahn normalization in \cite[DLMF~18.26.2]{nist2026digital}.
In the notation of Cunden et al., $\mathcal{Y}_{n}(\mu) = Q_{\mu}^{\mathbb{C}}(n,n)$.
Their Theorem~4.4 and equation~(4.12), specialized to the square case, give
\begin{equation}
\mathcal{Y}_{n}(\mu) = n^{2}\Gamma (\mu +1)p_{n-1}(\mu),
\qquad \Real \mu > -1.
\label{eq:square-cdh-identification}
\end{equation}
The cited theorem obtains this identity for complex moment orders by Carlson continuation from the nonnegative integers~\cite[Theorem~4.4 and equation~(4.12), p.~1107]{cunden2019moments}.
Here $\mu$ determines the polynomial argument, while the three parameters of the continuous dual Hahn family are fixed at $3/2,1/2,1/2$.
For these positive parameters, the family is orthogonal with respect to its standard positive measure, supported on arguments $y^{2}$ with $y > 0$~\cite[DLMF Table~18.25.1 and equations~18.25.2, 18.25.6--18.25.8]{nist2026digital}.
At $\mu = 1/2$, the argument in (\ref{eq:specialized-cdh}) is $-1$, outside this support, but the recurrence used below remains valid there because it is a polynomial identity in the argument.

The continuous dual Hahn recurrence~\cite[Section~9.3, equation~(9.3.4)]{koekoek2010hypergeometric}, specialized to the parameters in (\ref{eq:specialized-cdh}), is
\begin{equation}
(n+1)^{2}p_{n}(\mu)
-\{2n^{2}+\mu (\mu +1)\}p_{n-1}(\mu)
+(n-1)^{2}p_{n-2}(\mu)
= 0,
\qquad n \geq 1.
\label{eq:specialized-cdh-recurrence}
\end{equation}
For $n = 1$, the last term vanishes, so no value of $p_{-1}$ is required.
Multiplying (\ref{eq:specialized-cdh-recurrence}) by $\Gamma (\mu +1)$ and using (\ref{eq:square-cdh-identification}) gives
\begin{equation}
\mathcal{Y}_{n+1}(\mu)-2\mathcal{Y}_{n}(\mu)+\mathcal{Y}_{n-1}(\mu)
= \frac{\mu (\mu +1)}{n^{2}}\mathcal{Y}_{n}(\mu),
\qquad n \geq 1.
\label{eq:general-Y-recurrence}
\end{equation}
At $\mu = 1/2$, equation~(\ref{eq:alpha-Y}) gives $\mathcal{Y}_{n}(1/2) = Y_{n}$, so (\ref{eq:general-Y-recurrence}) becomes (\ref{eq:Y-recurrence-intro}).
The initial values follow from $Y_{0} = 0$ and $\mathcal{Y}_{1}(1/2) = \Gamma (3/2) = \sqrt{\pi}/2$.
This proves Proposition~\ref{prop:structure}.
For comparison, the related but distinct associated continuous Hahn family and its recurrence and orthogonality theory were studied by Gupta, Ismail, and Masson~\cite{gupta1991associated}.

\subsection{Laguerre moment generating function}

We can easily obtain the generating function needed for the asymptotic analysis from the Laguerre representation.
The Laguerre connection formula specialized to the parameters used here is~\cite[DLMF~18.18.18]{nist2026digital}
\begin{equation}
L_{n}(x)
= \sum_{k = 0}^{n}\frac{\left(-1/2\right)_{n-k}}{(n-k)!}L_{k}^{(1/2)}(x).
\label{eq:laguerre-connection}
\end{equation}
The polynomials on the right are orthogonal for the weight $x^{1/2}e^{-x}$, with~\cite[DLMF Table~18.3.1]{nist2026digital}
\begin{equation}
\int_{0}^{\infty}x^{1/2}e^{-x}L_{j}^{(1/2)}(x)L_{k}^{(1/2)}(x)\,dx
= \frac{\Gamma (k+3/2)}{k!}\delta_{j,k}.
\label{eq:laguerre-half-orthogonality}
\end{equation}
Squaring (\ref{eq:laguerre-connection}) and applying (\ref{eq:laguerre-half-orthogonality}) gives
\begin{equation}
u_{n}
= \frac{1}{\sqrt{\pi}}\int_{0}^{\infty}x^{1/2}e^{-x}L_{n}(x)^{2}\,dx
= \sum_{k = 0}^{n}
\left(\frac{\left(-1/2\right)_{n-k}}{(n-k)!}\right) ^{2}
\frac{\Gamma (k+3/2)}{\sqrt{\pi}\,k!}.
\label{eq:u-convolution}
\end{equation}
The squared connection coefficients and the Gamma factors have generating functions ${}_{2}F_{1}(-1/2,-1/2;1;z)$ and $\frac{1}{2}(1-z)^{-3/2}$, respectively.
Consequently, for $|z| < 1$,
\begin{equation}
U(z)
:= \sum_{n = 0}^{\infty}u_{n}z^{n}
= \frac{1}{2}(1-z)^{-3/2}{}_{2}F_{1}\!\left(-\frac{1}{2},-\frac{1}{2};1;z\right).
\label{eq:U-generating}
\end{equation}

\section{Proof of Theorem~\protect\ref{thm:all-orders}}

\label{sec:asymptotic-expansion}

The proof of Theorem~\ref{thm:all-orders} proceeds according to the following outline.
We first derive the generating function of $Y_{n}$, which can be explicitly written in terms of a hypergeometric function ${}_{2}F_{1}$ times a factor $\sqrt{\pi}/2z(1-z)^{-5/2}$.
This generating function is then expanded at its singularity $z = 1$, setting $t = 1-z$, and all the corresponding coefficients are obtained explicitly.
We then substitute this expansion into the generating function and use $z = 1-(1-z)$ to obtain the singular expansion of the full generating function in powers of $\left(1-z\right)$.
We then extract the coefficients of this singular expansion term by term to obtain the asymptotic expansion of $Y_{n}$.
The recurrence (\ref{eq:Y-recurrence-intro}) is then used to show that the odd powers of the expansion vanish.
The expansion of $\Delta_{n}$ is then obtained by subtracting the expansions at consecutive indices $n$ and $n+1$.

Since
\begin{equation*}
\frac{1}{\sqrt{\pi}}\sum_{n = 1}^{\infty}Y_{n}z^{n}
= \frac{z}{1-z}\sum_{n = 0}^{\infty}u_{n}z^{n},
\end{equation*}
we obtain from (\ref{eq:U-generating}) the generating function of $Y_{n}$:
\begin{equation}
\sum_{n = 1}^{\infty}Y_{n}z^{n}
= \sqrt{\pi}\frac{z}{1-z}U(z)
= \frac{\sqrt{\pi}}{2}z(1-z)^{-5/2}F(z),
\label{eq:Y-generating}
\end{equation}
where $F(z) := {}_{2}F_{1}\left(-\frac{1}{2},-\frac{1}{2};1;z\right)$.

\subsection{\texorpdfstring
{Expansion near $z = 1$}
{Expansion near z = 1}}

Formula~\cite[DLMF 15.8.10]{nist2026digital}, with $a = b = -1/2$ and $m = 2$, gives, for $0 < |t| < 1$ and $|\arg t| < \pi$, and $z = 1-t$, the $_{2}F_{1}$ hypergeometric expansion:
\begin{align}
F(1-t)
= {}& \frac{4}{\pi}-\frac{t}{\pi}  \notag \\
& -\frac{t^{2}}{4\pi}
\sum_{k = 0}^{\infty}
\frac{(3/2)_{k}^{2}}{k!(k+2)!}t^{k}
\left[\log t-\psi (k+1)-\psi (k+3)+2\psi (k+3/2)\right].
\label{eq:F-continuation-specialized}
\end{align}
Here $\psi = \Gamma ^{\prime}/\Gamma$ is the digamma function.
The normalized hypergeometric function used in the cited formula coincides with $F$ because $\Gamma (1) = 1$.
Writing (\ref{eq:F-continuation-specialized}) as
\begin{equation}
F(1-t)
= \sum_{m = 0}^{\infty}a_{m}t^{m}
+\log t\sum_{m = 2}^{\infty}b_{m}t^{m},
\label{eq:F-frobenius}
\end{equation}
we obtain
\begin{equation}
a_{0} = \frac{4}{\pi},\qquad a_{1} = -\frac{1}{\pi},
\label{eq:F-first-two-coefficients}
\end{equation}
and, for every integer $k \geq 0$,
\begin{align}
b_{k+2}& = -\frac{1}{4\pi}\frac{(3/2)_{k}^{2}}{k!(k+2)!},
\label{eq:b-closed} \\
a_{k+2}& = \frac{1}{4\pi}
\frac{(3/2)_{k}^{2}}{k!(k+2)!}
\left[\psi (k+1)+\psi (k+3)-2\psi (k+3/2)\right].
\label{eq:a-closed}
\end{align}
In particular,
\begin{equation}
a_{2} = \frac{8\log 2-5}{16\pi},\qquad b_{2} = -\frac{1}{8\pi}.
\label{eq:frobenius-initial}
\end{equation}
Thus all the coefficients in the expansion at $z = 1$ are explicit.

\subsection{Truncation and coefficient extraction}

\label{subsec:coefficient-extraction}

Define
\begin{equation*}
H(z) := \frac{z}{2}(1-z)^{-5/2}F(z).
\end{equation*}
By (\ref{eq:Y-generating}),
\begin{equation}
H(z)
= \frac{1}{\sqrt{\pi}}\sum_{n = 1}^{\infty}Y_{n}z^{n},
\qquad |z| < 1.
\label{H-Y}
\end{equation}
Set
\begin{equation*}
a_{-1} = b_{-1} = b_{0} = b_{1} = 0
\end{equation*}
and, for $j \geq 0$, define
\begin{equation}
c_{j} := \frac{a_{j}-a_{j-1}}{2},
\qquad
d_{j} := \frac{b_{j}-b_{j-1}}{2}.
\label{eq:c-d-def}
\end{equation}
Writing $z = 1-(1-z)$, the expansion (\ref{eq:F-frobenius}) gives
\begin{equation}
H(z)
= \sum_{j = 0}^{\infty}
\left(c_{j}+d_{j}\log (1-z)\right)
(1-z)^{j-5/2}.
\label{eq:H-singular-expansion}
\end{equation}
Consider the truncated expansion
\begin{equation*}
H_{J}(z)
= \sum_{j = 0}^{J}
\left(c_{j}+d_{j}\log (1-z)\right)
(1-z)^{j-5/2}.
\end{equation*}
From now on, denote by $[z^{n}]\left\{E(z)\right\}$ the coefficient of $z^{n}$ in the expansion of $E(z).$
Using this notation, (\ref{H-Y}) gives $[z^{n}]\left\{H(z)\right\} = Y_{n}/\sqrt{\pi}$.
Thus, $Y_{n}^{(J)}$, the approximation to $Y_{n}$ obtained by retaining the singular terms with $0 \leq j \leq J$, can be naturally defined as:
\begin{equation*}
Y_{n}^{(J)} := {}\sqrt{\pi}[z^{n}]\left\{H_{J}(z)\right\},
\end{equation*}
while $\alpha_{\mathbb{C}}^{(J)}(n)$, the corresponding approximation to $\alpha_{\mathbb{C}}(n)$, is defined as:
\begin{equation}
\alpha_{\mathbb{C}}^{(J)}(n) := \frac{Y_{n}^{(J)}}{n^{3/2}}.
\label{eq:alpha-J}
\end{equation}
For $\lambda \notin \{0,1,2,\ldots\}$, we have
\begin{equation*}
(1-z)^{\lambda}
= \sum\limits_{n = 0}^{\infty}
\frac{\Gamma (n-\lambda)}{\Gamma (-\lambda)\Gamma (n+1)}z^{n},
\qquad
\left\vert z\right\vert < 1.
\end{equation*}
Differenting this equality with respect to $\lambda$ gives, since $\psi = \frac{\Gamma ^{\prime}}{\Gamma}$,
\begin{equation*}
(1-z)^{\lambda}\log (1-z)
= \sum\limits_{n = 0}^{\infty}
\frac{\Gamma (n-\lambda)}{\Gamma (-\lambda)\Gamma (n+1)}
\left[\psi (-\lambda)-\psi (n-\lambda)\right].
\end{equation*}
Thus:
\begin{align}
\lbrack z^{n}](1-z)^{\lambda}
& = \frac{\Gamma (n-\lambda)}{\Gamma (-\lambda)\Gamma (n+1)},
\label{eq:power-coefficient} \\
\lbrack z^{n}](1-z)^{\lambda}\log (1-z)
& = \frac{\Gamma (n-\lambda)}{\Gamma (-\lambda)\Gamma (n+1)}
\left[\psi (-\lambda)-\psi (n-\lambda)\right].
\label{eq:log-power-coefficient}
\end{align}
Applying these identities with $\lambda = j-5/2$ to the terms of (\ref{eq:H-singular-expansion}) with $0 \leq j \leq J$ gives
\begin{align}
Y_{n}^{(J)}
:= {}& \sqrt{\pi}
\sum_{j = 0}^{J}
c_{j}
\frac{\Gamma (n-j+5/2)}{\Gamma (5/2-j)\Gamma (n+1)}  \notag \\
& +\sqrt{\pi}
\sum_{j = 2}^{J}
d_{j}
\frac{\Gamma (n-j+5/2)}{\Gamma (5/2-j)\Gamma (n+1)}
\left[\psi (5/2-j)-\psi (n-j+5/2)\right].
\label{eq:Y-J}
\end{align}
Apart from $\Gamma (n+1)$, all Gamma and digamma arguments in (\ref{eq:Y-J}) are nonintegral half-integers.
Thus $Y_{n}^{(J)}$ is well defined even when $J > n$.
We now estimate the error resulting by truncating (\ref{eq:H-singular-expansion}).
Let
\begin{equation*}
R_{J}(z)
:= H(z)
-\sum_{j = 0}^{J}
\left(c_{j}+d_{j}\log (1-z)\right)
(1-z)^{j-5/2}.
\end{equation*}
The convergent expansion (\ref{eq:F-frobenius}) gives, uniformly in closed subsectors of $|\arg (1-z)| < \pi$,
\begin{equation}
R_{J}(z)
= O_{J}\left(
|1-z|^{J-3/2}
\left(1+|\log (1-z)|\right)
\right),
\label{ORJ}
\end{equation}
as $z \to 1$.
The principal branch of $F$ is analytic in $\mathbb{C}\setminus \lbrack 1,\infty)$.
The finite sum defining $H_{J}$ has the same property, and hence $R_{J} = H-H_{J}$ is analytic in a $\Delta$-domain at $z = 1$.
The transfer theorem for algebraic and logarithmic singularities tells that the singular behavior of a generating function near its dominant singularity determines the asymptotic behavior of its coefficients \cite[Theorem VI.3, p.~390]{flajolet2009analytic}.
Thus, (\ref{ORJ}) gives
\begin{equation*}
\lbrack z^{n}]\left\{R_{J}(z)\right\} = O_{J}\left(n^{-J+1/2}\log n\right).
\end{equation*}
Since $[z^{n}]\left\{H(z)\right\} = Y_{n}/\sqrt{\pi}$ and $[z^{n}]\left\{H_{J}(z)\right\} = Y_{n}^{(J)}/\sqrt{\pi}$, we obtain
\begin{align}
Y_{n}& = Y_{n}^{(J)}+O_{J}\left(n^{-J+1/2}\log n\right),
\label{eq:Y-transfer} \\
\alpha_{\mathbb{C}}(n)& = \alpha_{\mathbb{C}}^{(J)}(n)
+O_{J}\left(\frac{\log n}{n^{J+1}}\right).
\label{eq:alpha-transfer}
\end{align}

\subsection{Proof of (\protect\ref{eq:alpha-all-orders}) and (\protect\ref{eq:delta-all-orders})}

\emph{Step 1.}
Using \cite[DLMF~5.11.2 and 5.11.13]{nist2026digital}, we observe that, for each fixed $j$, the Gamma quotient in (\ref{eq:Y-J}) has an asymptotic expansion in descending powers of $n$, while the corresponding digamma difference equals $-\log n+O(1)$.
From \cite[DLMF~5.11.8 and 5.11.17]{nist2026digital} it follows that the corresponding coefficients are given explicitly in terms of Bernoulli polynomials.
Thus, $\alpha_{\mathbb{C}}^{(J)}(n)$ has an expansion to any prescribed order with coefficients of the form $A\log n+B$.

\emph{Step 2.}
We show that every odd inverse power vanishes.
By Step 1 and (\ref{eq:alpha-transfer}), $\alpha_{\mathbb{C}}(n)$ has, to arbitrary order, an expansion in integer powers of $n^{-1}$, whose coefficients have the form $A\log n+B$.
Write this expansion as
\begin{equation*}
\alpha_{\mathbb{C}}(n)
\sim \sum_{j \geq 0}(A_{j}\log n+B_{j})n^{-j}.
\end{equation*}
Since $Y_{n} = n^{3/2}\alpha_{\mathbb{C}}(n)$, the corresponding expansion of $Y_{n}$ is
\begin{equation}
Y_{n}
\sim \sum_{j \geq 0}n^{3/2-j}(A_{j}\log n+B_{j}).
\label{Yn}
\end{equation}
To compute the action of the three-term recurrence operator
\begin{equation}
\mathcal{L}_{n}(f) := f(n+1)-2f(n)+f(n-1)
\label{recOper}
\end{equation}
on each term of the expansion of $Y_{n}$, write
\begin{equation*}
f(n) = n^{p}(A\log n+B).
\end{equation*}
Expanding $f(n+1)$ and $f(n-1)$ in Taylor series and adding the results, gives
\begin{equation}
f(n+1)-2f(n)+f(n-1)
= 2\sum\limits_{k = 1}^{\infty}\frac{f^{(2k)}(n)}{(2k)!}.
\label{Taylor}
\end{equation}
Since $f^{(4)}(n) = O\left(n^{p-4}\log n\right)$ and
\begin{equation*}
f^{\prime \prime}(n)
= n^{p-2}\left(p(p-1)(A\log n+B)+(2p-1)A\right),
\end{equation*}
subtracting to both sides the term
\begin{equation*}
\frac{3}{4n^{2}}f(n)
= \frac{3}{4}n^{p-2}\left(A\log n+B\right),
\end{equation*}
leads to
\begin{align}
& f(n+1)-2f(n)+f(n-1)-\frac{3}{4n^{2}}f(n)  \notag \\
& \quad = {}n^{p-2}
\left\{
\left(p(p-1)-\frac{3}{4}\right) (A\log n+B)
+(2p-1)A
\right\}
+O\!\left(n^{p-4}\log n\right).
\label{right}
\end{align}
We now substitute the expansion (\ref{Yn}) into (\ref{eq:Y-recurrence-intro}) and compare, at each power of $n$, the logarithmic and constant coefficients.
At the order $n^{-j-1/2}$, formula (\ref{Taylor}) shows that the possible contributions come from the term of index $j$ through its second derivative, from the term of index $j-2$ through its fourth derivative, from the term of index $j-4$ through its sixth derivative, and so on.
Thus the equation at index $j$ involves only the coefficients at indices

\begin{equation*}
j, j-2, j-4,\ldots.
\end{equation*}
Consequently, the equations with even indices involve only coefficients with even indices, while those with odd indices involve only coefficients with odd indices.
To complete the proof, we show by induction that $A_{j} = B_{j} = 0$ for every odd index $j$ in the expansion (\ref{Yn}).
For the initial case $j = 1$, only the term indexed by $1$ contributes at order $n^{-3/2}$.
Taking $p = 1/2$, $A = A_{1}$, and $B = B_{1}$ in (\ref{right}), and comparing the logarithmic and constant coefficients at order $n^{-3/2}$ in (\ref{eq:Y-recurrence-intro}), gives
\begin{equation*}
A_{1} = 0,\qquad B_{1} = 0.
\end{equation*}
Now let $j \geq 3$ be odd and assume, as the induction hypothesis, that all coefficients at smaller odd indices vanish; that is,
\begin{equation*}
A_{1} = B_{1} = A_{3} = B_{3} = \cdots = A_{j-2} = B_{j-2} = 0.
\end{equation*}
At order $n^{-j-1/2}$, the recurrence involves only the coefficients at indices
\begin{equation*}
j, j-2, j-4,\ldots,1.
\end{equation*}
By the induction hypothesis, all contributions from the indices $j-2,j-4,\ldots ,1$ vanish.
It remains only to consider the term with index $j$.
Taking
\begin{equation*}
p = \frac{3}{2}-j,\qquad A = A_{j},\qquad B = B_{j}
\end{equation*}
in (\ref{right}), the multiplier of $A_{j}\log n+B_{j}$ is
\begin{equation*}
\left(\frac{3}{2}-j\right) \left(\frac{1}{2}-j\right) -\frac{3}{4}
= j(j-2).
\end{equation*}
Comparison of the logarithmic coefficients at order $n^{-j-1/2}$ therefore gives
\begin{equation*}
j(j-2)A_{j} = 0.
\end{equation*}
Since $j$ is odd, $j(j-2) \neq 0$, and hence $A_{j} = 0$.
Comparison of the constant coefficients at the same order gives
\begin{equation*}
j(j-2)B_{j}+2(1-j)A_{j} = 0.
\end{equation*}
Since $A_{j} = 0$ and $j(j-2) \neq 0$, it follows that $B_{j} = 0$.
This completes the induction and proves that all odd inverse powers in the asymptotic expansion of $\alpha_{\mathbb{C}}(n)$ vanish.

\emph{Step 3.}
To prove (\ref{eq:alpha-all-orders}), take $J = 2$ when $M = 0$ and $J = 2M+1$ when $M \geq 1$, expand $\alpha_{\mathbb{C}}^{(J)}(n)$ through order $n^{-2M}$, and combine the finite expansion with (\ref{eq:alpha-transfer}).
The constant term is $8/(3\pi)$ by (\ref{eq:alpha-limit}); relabeling the coefficients with even indices proves (\ref{eq:alpha-all-orders}).

The constant term in the expansion of $\alpha_{\mathbb{C}}(n)$ cancels in a consecutive difference, while the first correction, of order $n^{-2}\log n$, gives a difference of order $n^{-3}\log n$.
This explains the starting power in (\ref{eq:delta-all-orders}).
To obtain that expansion with a controlled remainder, take $J = M+3$ in (\ref{eq:alpha-transfer}), write the resulting finite approximations at $n$ and $n+1$, and subtract.
Bounding the two remainders separately gives
\begin{equation*}
\Delta_{n}
= \alpha_{\mathbb{C}}^{(J)}(n)
-\alpha_{\mathbb{C}}^{(J)}(n+1)
+O_{M}\!\left(\frac{\log n}{n^{M+4}}\right).
\end{equation*}
Expanding this finite difference proves (\ref{eq:delta-all-orders}); no cancellation between the remainder terms is required.

\subsection{Leading asymptotic coefficients}

We now compute the first correction to the limiting value $8/(3\pi)$.
After division of (\ref{eq:Y-J}) by $n^{3/2}$, the term indexed by $j$ begins at the inverse power $n^{-j}$, with an additional factor $\log n$ when $d_{j} \neq 0$.
Consequently, to determine all terms with inverse powers $n^{0}$, $n^{-1}$, and $n^{-2}$, including their logarithmic factors, it suffices to retain the terms indexed by $j = 0,1,2$.
We denote their respective contributions to $\alpha_{\mathbb{C}}^{(J)}(n)$ ($J \geq 2$) by $T_{0}(n)$, $T_{1}(n)$ and $T_{2}(n)$.
From (\ref{eq:frobenius-initial}) and (\ref{eq:c-d-def}), the required coefficients in the expansion (\ref{eq:H-singular-expansion}) are

\begin{equation*}
c_{0} = \frac{2}{\pi},
c_{1} = -\frac{5}{2\pi},
c_{2} = \frac{1}{\pi}\left(\frac{\log 2}{4}+\frac{11}{32}\right);
\qquad
d_{0} = d_{1} = 0,
d_{2} = -\frac{1}{16\pi}.
\end{equation*}
The $j = 0$ contribution is

\begin{equation*}
T_{0}(n)
:= \frac{\sqrt{\pi}c_{0}}{n^{3/2}}
\frac{\Gamma (n+5/2)}{\Gamma (5/2)\Gamma (n+1)}.
\end{equation*}
Using
\begin{equation*}
\frac{\Gamma (n+5/2)}{\Gamma (n+1)n^{3/2}}
= 1+\frac{15}{8n}+\frac{65}{128n^{2}}+O(n^{-3}),
\end{equation*}
we obtain
\begin{equation*}
T_{0}(n)
= \frac{8}{3\pi}+\frac{5}{\pi n}+\frac{65}{48\pi n^{2}}+O(n^{-3}).
\end{equation*}
Similarly, the $j = 1$ contribution is
\begin{equation*}
T_{1}(n)
:= \frac{\sqrt{\pi}c_{1}}{n^{3/2}}
\frac{\Gamma (n+3/2)}{\Gamma (3/2)\Gamma (n+1)}.
\end{equation*}
Since
\begin{equation*}
\frac{\Gamma (n+3/2)}{\Gamma (n+1)n^{3/2}}
= \frac{1}{n}\left(1+\frac{3}{8n}+O(n^{-2})\right),
\end{equation*}
it follows that
\begin{equation*}
T_{1}(n)
= -\frac{5}{\pi n}-\frac{15}{8\pi n^{2}}+O(n^{-3}).
\end{equation*}
For $j = 2$, the analytic and logarithmic parts must be considered together.
Their sum is
\begin{equation*}
T_{2}(n)
:= \frac{1}{n^{3/2}}
\frac{\Gamma (n+1/2)}{\Gamma (n+1)}
\left\{
c_{2}+d_{2}\left[\psi (1/2)-\psi (n+1/2)\right]
\right\}.
\end{equation*}
Using
\begin{equation*}
\frac{\Gamma (n+1/2)}{\Gamma (n+1)n^{3/2}}
= \frac{1}{n^{2}}\left(1+O(n^{-1})\right)
\end{equation*}
and
\begin{equation*}
\psi (1/2) = -\gamma -2\log 2,
\qquad
\psi (n+1/2) = \log n+O(n^{-1}),
\end{equation*}
we find
\begin{equation*}
T_{2}(n)
= \frac{1}{16\pi n^{2}}\left(\ell_{n}+\frac{11}{2}\right)
+O\left(\frac{\log n}{n^{3}}\right),
\end{equation*}
where $\ell_{n} = \log n+\gamma +6\log 2$.
Adding the three contributions gives
\begin{equation}
\alpha_{\mathbb{C}}(n)
= \frac{8}{3\pi}
+\frac{\ell_{n}-17/6}{16\pi n^{2}}
+O\left(\frac{\log n}{n^{4}}\right).
\label{eq:alpha-asymptotic}
\end{equation}
Finally,
\begin{equation*}
\frac{\ell_{n}-a}{n^{2}}
-\frac{\ell_{n+1}-a}{(n+1)^{2}}
= \frac{2(\ell_{n}-a)-1}{n^{3}}
+O\left(\frac{\log n}{n^{4}}\right).
\end{equation*}
Taking $a = 17/6$ and subtracting the expansion at $n+1$ from the expansion at $n$ gives
\begin{equation}
\Delta_{n}
= \frac{\ell_{n}-10/3}{8\pi n^{3}}
+O\left(\frac{\log n}{n^{4}}\right).
\label{eq:delta-asymptotic}
\end{equation}

\section{Proof of Theorem~\protect\ref{thm:main}}

\label{sec:sharp-decrement}

The proof of Theorem \ref{thm:main} combines the recurrence (\ref{eq:Y-recurrence-intro}) with the leading asymptotic formula (\ref{eq:delta-asymptotic}).
The recurrence will show that a suitable auxiliary sequence is strictly decreasing, while the asymptotic formula identifies its limit.
By writing $\Delta_{d}$ in terms of $Y_{d}$ as
\begin{equation*}
\Delta_{d}
= \alpha_{\mathbb{C}}(d)-\alpha_{\mathbb{C}}(d+1)
= \frac{Y_{d}}{d^{3/2}}-\frac{Y_{d+1}}{(d+1)^{3/2}},
\end{equation*}
we obtain:
\begin{align*}
& \frac{(d+1)^{3/2}(d+2)^{3/2}}{\sqrt{\pi}}\Delta_{d+1}
-\frac{d^{3/2}(d+1)^{3/2}}{\sqrt{\pi}}\Delta_{d} \\
& \quad = \frac{1}{\sqrt{\pi}}
\left\{
\left((d+2)^{3/2}+d^{3/2}\right)Y_{d+1}
-(d+1)^{3/2}(Y_{d+2}+Y_{d})
\right\}.
\end{align*}
Replacing $d$ by $d+1$ in the recurrence (\ref{eq:Y-recurrence-intro}) gives
\begin{equation*}
Y_{d+2}+Y_{d}
= \left(2+\frac{3}{4(d+1)^{2}}\right) Y_{d+1}.
\end{equation*}
Consequently,
\begin{align}
& \frac{(d+1)^{3/2}(d+2)^{3/2}}{\sqrt{\pi}}\Delta_{d+1}
-\frac{d^{3/2}(d+1)^{3/2}}{\sqrt{\pi}}\Delta_{d}  \notag \\
& \qquad = \frac{Y_{d+1}}{\sqrt{\pi}}
\left[
(d+2)^{3/2}-2(d+1)^{3/2}+d^{3/2}
-\frac{3}{4\sqrt{d+1}}
\right].
\label{eq:scaled-delta-increment}
\end{align}
The expression in brackets is positive.
Indeed,
\begin{align*}
& (d+2)^{3/2}-2(d+1)^{3/2}+d^{3/2} \\
& \qquad = \int_{-1}^{1}(1-|t|)\frac{3}{4\sqrt{d+1+t}}\,dt.
\end{align*}
The weight $1-|t|$ has integral one and mean zero on $[-1,1]$, and $t \mapsto (d+1+t)^{-1/2}$ is strictly convex.
Jensen's inequality therefore gives
\begin{equation*}
(d+2)^{3/2}-2(d+1)^{3/2}+d^{3/2} > \frac{3}{4\sqrt{d+1}}.
\end{equation*}
Since $Y_{d+1} > 0$, (\ref{eq:scaled-delta-increment}) implies
\begin{equation*}
\frac{(d+1)^{3/2}(d+2)^{3/2}}{\sqrt{\pi}}\Delta_{d+1}
> \frac{d^{3/2}(d+1)^{3/2}}{\sqrt{\pi}}\Delta_{d}.
\end{equation*}
Before completing the proof of Theorem~\ref{thm:main}, we need two auxiliary results.
The first is an upper bound for $\alpha_{\mathbb{C}}(d)$.

\begin{lemma}
\label{lem:alpha-upper}
For every integer $d \geq 1$, set
\begin{equation*}
B_d
:= \frac{8}{3\pi}
\frac{(d+1/2)\Gamma(d+1/2)}{d^{3/2}\Gamma(d)}.
\end{equation*}
Then
\begin{equation}
\alpha_{\mathbb{C}}(d)
< B_d
< \frac{8}{3\pi}\frac{d+1/2}{d}.
\label{eq:alpha-upper}
\end{equation}
\end{lemma}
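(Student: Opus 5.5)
The plan is to prove the first inequality by a discrete comparison argument based on the recurrence of Proposition~\ref{prop:structure}, and the second by an elementary Gamma-ratio bound.

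\emph{Reformulation.} Since $(d+1/2)\Gamma(d+1/2) = \Gamma(d+3/2)$, we have $d^{3/2}B_d = Z_d$, where
\begin{equation*}
Z_d := \frac{8}{3\pi}\frac{\Gamma(d+3/2)}{\Gamma(d)}, \qquad Z_0 := 0,
\end{equation*}
and $Z_0 = 0$ is consistent with $1/\Gamma(0) = 0$. Because $Y_d = d^{3/2}\alpha_{\mathbb{C}}(d)$, the first inequality in (\ref{eq:alpha-upper}) is equivalent to $Y_d < Z_d$ for every $d \geq 1$. At $d = 1$ this holds directly: $Z_1 = \frac{8}{3\pi}\cdot\frac{3\sqrt{\pi}}{4} = 2/\sqrt{\pi}$, which exceeds $Y_1 = \sqrt{\pi}/2$ because $\pi < 4$.

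\emph{Key step: $Z$ is a strict supersolution.} Using the functional equation of $\Gamma$, the two neighbouring ratios are
\begin{equation*}
\frac{Z_{d+1}}{Z_d} = \frac{d+3/2}{d}, \qquad \frac{Z_{d-1}}{Z_d} = \frac{d-1}{d+1/2}.
\end{equation*}
At $d = 1$ the second ratio is $0$, matching $Z_0 = 0$. Adding the two ratios gives
\begin{equation*}
\frac{Z_{d+1}+Z_{d-1}}{Z_d} = \frac{(d+3/2)(d+1/2)+d(d-1)}{d(d+1/2)} = 2+\frac{3}{4(d^{2}+d/2)} < 2+\frac{3}{4d^{2}}.
\end{equation*}
Since $Z_d > 0$, this says $Z_{d+1} < \bigl(2+\tfrac{3}{4d^2}\bigr)Z_d - Z_{d-1}$ for all $d \geq 1$.

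\emph{Comparison by induction.} Set $E_d := Z_d - Y_d$, so that $E_0 = 0$ and $E_1 > 0$. Subtracting the recurrence (\ref{eq:Y-recurrence-intro}) from the supersolution inequality yields
\begin{equation*}
E_{d+1} - E_d > (E_d - E_{d-1}) + \frac{3}{4d^2}E_d .
\end{equation*}
We prove by induction that $E_d > 0$ and $E_d - E_{d-1} > 0$ for all $d \geq 1$. The base case is $E_1 - E_0 = E_1 > 0$. In the inductive step, both terms on the right are positive, so $E_{d+1} - E_d > 0$, and hence $E_{d+1} > E_d > 0$. This gives $Y_d < Z_d$, that is, $\alpha_{\mathbb{C}}(d) < B_d$ for every $d \geq 1$.

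\emph{Second inequality.} It reduces to $\Gamma(d+1/2)/\Gamma(d) < \sqrt{d}$. This follows from the strict log-convexity of $\Gamma$ applied at the midpoint of $d$ and $d+1$:
\begin{equation*}
\Gamma(d+1/2)^2 < \Gamma(d)\Gamma(d+1) = d\,\Gamma(d)^2 .
\end{equation*}
Substituting this into the definition of $B_d$ gives $B_d < \frac{8}{3\pi}\frac{(d+1/2)\sqrt{d}}{d^{3/2}} = \frac{8}{3\pi}\frac{d+1/2}{d}$.

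The only step requiring care is the supersolution check, which comes down to the explicit identity for $(Z_{d+1}+Z_{d-1})/Z_d$; the rest is routine.
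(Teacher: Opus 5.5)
\textbf{Gap: the comparison inequality has the wrong sign.} Subtracting the recurrence $Y_{d+1}=\bigl(2+\frac{3}{4d^2}\bigr)Y_d-Y_{d-1}$ from your inequality $Z_{d+1}<\bigl(2+\frac{3}{4d^2}\bigr)Z_d-Z_{d-1}$ gives $E_{d+1}<\bigl(2+\frac{3}{4d^2}\bigr)E_d-E_{d-1}$. This is an \emph{upper} bound on the increment, not the lower bound you wrote. Using your identity, the exact relation is
\[
E_{d+1}-E_d=(E_d-E_{d-1})+\frac{3}{4d^2}E_d-\frac{3Z_d}{4d^2(2d+1)},
\]
and the last term has the wrong sign for your induction. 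What you verified is that $Z$ has \emph{too little} discrete curvature: its second difference $\frac{3}{4(d^2+d/2)}Z_d$ falls short of the $\frac{3}{4d^2}Z_d$ required by the recurrence. A convexity-type induction for $Z_d-Y_d>0$ needs the reverse.

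Your displayed inequality is in fact false. At $d=1$, $Y_2=11\sqrt{\pi}/8$ and $Z_2=5/\sqrt{\pi}$ give $E_2-E_1=\frac74E_1-\frac{1}{2\sqrt{\pi}}<\frac74E_1$. It has to fail: it would make the increments of $E$ increasing, forcing $E_d\ge dE_1$. But $Z_d=\frac{8}{3\pi}d^{3/2}\bigl(1+\frac{3}{8d}+O(d^{-2})\bigr)$ and $Y_d=\frac{8}{3\pi}d^{3/2}+O(d^{-1/2}\log d)$, so $E_d=\sqrt{d}/\pi+o(1)$.

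\textbf{A repair, and comparison with the paper.} Your identity can still be used, but through ratios and with an extra global input. Set $r_d:=Y_{d+1}/Y_d$ and $s_d:=Z_{d+1}/Z_d=(d+3/2)/d$. For $d\ge2$ the recurrence gives $r_d=2+\frac{3}{4d^2}-1/r_{d-1}$, and your identity gives $s_d<2+\frac{3}{4d^2}-1/s_{d-1}$. Since $r_1=11/4>5/2=s_1$, induction yields $r_d>s_d$ for all $d$, so $Y_d/Z_d$ is strictly increasing. By the Marchenko--Pastur limit (\ref{eq:alpha-limit}) and $\Gamma(d+3/2)/(d^{3/2}\Gamma(d))\to1$, we have $Y_d/Z_d\to1$, and hence $Y_d<Z_d$.

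The limit is indispensable in this route. The ratio comparison does not see the constant $8/(3\pi)$ in $Z_d$, and that constant is sharp.

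The paper avoids this by working directly with the generating function (\ref{eq:U-generating}). Since $F$ has positive coefficients with $F(1)=4/\pi$, and $(3/2)_k/k!$ increases, one gets $u_k<\frac{2}{\pi}\frac{(3/2)_k}{k!}$. Summing with $\sum_{k\le n}(3/2)_k/k!=(5/2)_n/n!$ gives $Y_d<Z_d$ with no asymptotic input.

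Your proof of the second inequality, via log-convexity of $\Gamma$, is correct and is the same as the paper's.
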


\begin{proof}
Write
\begin{equation*}
F(z)
= {}_{2}F_{1}\!\left(-\frac{1}{2},-\frac{1}{2};1;z\right)
= \sum_{m = 0}^{\infty}\frac{(-1/2)_{m}^{2}}{(m!)^{2}}z^{m}.
\end{equation*}
Multiplying the binomial expansion,
\begin{equation*}
(1-z)^{-3/2}
= \sum_{k = 0}^{\infty}\frac{(3/2)_{k}}{k!}z^{k},
\end{equation*}
by $F(z)$, and using the generating function (\ref{eq:U-generating}) of the sequence $\{u_{k}\}$ to compare the coefficients of $z^{k}$, gives
\begin{equation*}
u_{k}
= \frac{1}{2}\sum_{m = 0}^{k}
\frac{(-1/2)_{m}^{2}}{(m!)^{2}}
\frac{(3/2)_{k-m}}{(k-m)!}.
\end{equation*}
Since $(3/2)_{k}/k!$ is increasing, the power series coefficients of $F(z)$ are positive, and extending the finite sum over $m$ to the infinite sum adds a nonzero positive tail,
\begin{equation*}
u_{k}
< \frac{1}{2}\frac{(3/2)_{k}}{k!}
\sum_{m = 0}^{\infty}
\frac{(-1/2)_{m}^{2}}{(m!)^{2}}
= \frac{1}{2}\frac{(3/2)_{k}}{k!}F(1)
= \frac{2}{\pi}\frac{(3/2)_{k}}{k!},
\end{equation*}
where in the last identity Gauss's summation formula (\ref{Gauss}) was used to obtain $F(1) = \frac{4}{\pi}$.
Using the identity
\begin{equation*}
\sum_{k = 0}^{n}\frac{(3/2)_{k}}{k!}
= 1+\sum_{k = 0}^{n}
\left(
\frac{(5/2)_{k}}{k!}
-\frac{(5/2)_{k-1}}{(k-1)!}
\right)
= \frac{(5/2)_{n}}{n!},
\end{equation*}
together with the definition of $Y_{n+1}$ (\ref{eq:alpha-Y}), gives
\begin{equation*}
\frac{Y_{n+1}}{\sqrt{\pi}}
= \sum_{k = 0}^{n}u_{k}
< \sum_{k = 0}^{n}\frac{2}{\pi}\frac{(3/2)_{k}}{k!}
= \frac{2}{\pi}\frac{(5/2)_{n}}{n!}.
\end{equation*}
Setting $d = n+1$ and writing the factorials in terms of the Gamma function gives
\begin{equation*}
\alpha_{\mathbb{C}}(d)
= \frac{Y_{d}}{d^{3/2}}
< B_d.
\end{equation*}
By the strict log convexity of the Gamma function (or by using Cauchy Schwarz in the integral representation of $\Gamma (d+1/2)$),
\begin{equation*}
\Gamma (d+1/2)^{2}
= \Gamma \left(\frac{d+(d+1)}{2}\right) ^{2}
< \Gamma(d)\Gamma(d+1)
= d\Gamma (d)^{2}.
\end{equation*}
Thus $\Gamma (d+1/2) < \sqrt{d}\,\Gamma (d)$, which proves the second inequality in (\ref{eq:alpha-upper}).
\end{proof}

Finally, we will need an upper bound for the positive quantity appearing in (\ref{eq:scaled-delta-increment}):
\begin{equation}
q_{d}
:= (d+2)^{3/2}-2(d+1)^{3/2}+d^{3/2}
-\frac{3}{4\sqrt{d+1}}.
\label{qd}
\end{equation}

\begin{lemma}
\label{lem:q-upper}
For every integer $d \geq 2$,
\begin{equation}
q_{d}
< \frac{3}{64\sqrt{d+1}\,(d+3/2)}\log \frac{d+1}{d}.
\label{eq:q-upper}
\end{equation}
\end{lemma}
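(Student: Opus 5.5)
We work with $n := d+1 \geq 3$ and $x := 1/n \leq 1/3$, and compare both sides of (\ref{eq:q-upper}) as power series in $x$. Set $g(s) := s^{3/2}$. Taylor expanding $g(n\pm 1)$ about $n$ exactly as in (\ref{Taylor}) gives
\begin{equation*}
(d+2)^{3/2}-2(d+1)^{3/2}+d^{3/2} = 2\sum_{k\geq 1}\frac{g^{(2k)}(n)}{(2k)!}.
\end{equation*}
This is the binomial series of $(1\pm x)^{3/2}$, which converges absolutely for $x<1$.

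\textbf{Left-hand side.} The $k=1$ term is $g''(n) = \frac{3}{4}n^{-1/2}$. It cancels exactly with the subtracted term $\frac{3}{4\sqrt{d+1}}$. For $k\geq 2$ every derivative $g^{(2k)}(n)$ is positive, because the factor $(3/2)(1/2)(-1/2)(-3/2)\cdots$ has an even number of negative factors. We therefore get
\begin{equation*}
q_d = \frac{3}{64}\,n^{-5/2}\Big(1+\frac{7}{24}x^{2}+\sum_{k\geq 2}c_k x^{2k}\Big),\qquad c_k>0.
\end{equation*}
Here $c_k$ is $\frac{64}{3}\cdot\frac{2}{(2k+4)!}$ times the coefficient of $n^{-2k-5/2}$ in $g^{(2k+4)}(n)$. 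The first two values come from $g^{(4)} = \frac{9}{16}n^{-5/2}$ and $g^{(6)} = \frac{315}{64}n^{-9/2}$.

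\textbf{Right-hand side.} It equals $\frac{3}{64}n^{-5/2}h(x)$, where
\begin{equation*}
h(x) := \frac{\log\frac{1}{1-x}}{x\,(1+x/2)} = \Big(\sum_{j\geq 0}\frac{x^{j}}{j+1}\Big)\Big(\sum_{i\geq 0}(-x/2)^{i}\Big).
\end{equation*}
Multiplying out gives $h(x) = 1 + 0\cdot x + \frac{1}{3}x^{2} + \frac{1}{12}x^{3}+\cdots$. I will show that every coefficient $e_m$ of $h$ with $m\geq 3$ is nonnegative, which gives $h(x)\geq 1+x^{2}/3$ for $0<x<1$. One route uses the relation $e_m + \frac{1}{2}e_{m-1} = \frac{1}{m+1}$, coming from $h(x)(1+x/2)=\sum_j x^j/(j+1)$. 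It yields $e_m = \frac{1}{m+1} - \frac{1}{2}e_{m-1}$. Starting from $e_2 = 1/3$, a short induction then shows $0 < e_m \leq \frac{1}{m}$ for all $m\geq 2$.

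\textbf{Reduction.} It now suffices to prove
\begin{equation*}
\frac{7}{24}x^{2}+\sum_{k\geq 2}c_kx^{2k} < \frac{1}{3}x^{2},
\qquad\text{that is,}\qquad
\sum_{k\geq 2}c_k x^{2k-2} < \frac{1}{24}\quad\text{for } 0<x\leq\tfrac13.
\end{equation*}
The hypothesis $d\geq 2$ is what guarantees $x\leq 1/3$.

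\textbf{Main obstacle: the tail.} The coefficients $c_k$ are ratios of the binomial coefficients $\binom{3/2}{2k+4}$ to $\binom{3/2}{4}$, so their successive ratios $c_{k+1}/c_k$ are explicit rational functions of $k$. These ratios are bounded by $1$, since $|\binom{3/2}{m}|$ decreases in $m$ for $m\geq 2$. Hence the tail is at most $c_2x^{2}/(1-x^{2})$. With $x^{2}\leq 1/9$ this bound is $\leq c_2/8$, so I only need $c_2<1/3$. Computing, $c_2$ comes from $g^{(8)} = \frac{315}{64}\cdot\frac{9}{2}\cdot\frac{11}{2}\,n^{-13/2}$, giving a value around $0.19$, which is comfortably small. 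If that crude ratio bound is too lossy, I would instead sum the tail in closed form. That tail is the even part of the binomial series of $(1\pm x)^{3/2}$ minus its first three terms, and it can be bounded directly at $x=1/3$.
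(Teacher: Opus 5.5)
Your proof is correct and starts from the same reduction as the paper. With $x=t=1/(d+1)$, both sides become $\tfrac{3}{64}(d+1)^{-5/2}$ times $\sum_{j\ge 0}\lambda_j x^{2j}$ and $h(x)=-\log(1-x)/\bigl(x(1+x/2)\bigr)$ respectively, where $\lambda_j=\tfrac{128}{3}\binom{3/2}{2j+4}$ (your $c_j$).

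The difference lies in how the tail is controlled. The paper compares the two series coefficient by coefficient: $\lambda_0=a_0$, $\lambda_1<a_2$, $\lambda_2<a_4$, and $\lambda_j<\tfrac{1}{3(j+1)}<a_{2j}$ for $j\ge 3$. The last step uses a ratio estimate for $\lambda_j$ and the integral formula for $a_m$, and it proves the inequality for every $0<t<1$. You instead keep only $1+\tfrac13x^2$ from $h$, since all its other coefficients are nonnegative by the recurrence $e_m+\tfrac12e_{m-1}=\tfrac1{m+1}$. You then absorb the whole left tail into the slack $\tfrac13-\tfrac7{24}=\tfrac1{24}$ with a geometric bound.

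Your route is shorter and needs nothing about $\lambda_j$ beyond monotonicity, but it genuinely uses $x\le 1/3$. At $x=1/2$, i.e.\ $d=1$, your bound gives $c_2/3=\tfrac{11}{256}>\tfrac1{24}$, so it fails there. You would need a small refinement, for instance also retaining the term $e_3x^3=x^3/12$ of $h$. The paper's version covers $d=1$, and that case is actually used later: the proof of Theorem~\ref{thm:main} applies the lemma to $q_{d-1}$ for every $d\ge 2$, including $q_1$.

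One numerical slip: $c_2=\tfrac{128}{3}\binom{3/2}{8}=\tfrac{33}{256}\approx 0.129$, not $0.19$. This is harmless, since you only need $c_2<\tfrac13$.
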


\begin{proof}
Writing (\ref{qd}) as
\begin{equation*}
q_{d}
= (d+1)^{3/2}
\left[
\left(1+\frac{1}{d+1}\right) ^{3/2}
+\left(1-\frac{1}{d+1}\right) ^{3/2}
-2-\frac{3}{4(d+1)^{2}}
\right],
\end{equation*}
it becomes clear that (\ref{eq:q-upper}) is equivalent to
\begin{equation}
\frac{64(d+1)^{2}}{3}
\left[
\left(1+\frac{1}{d+1}\right) ^{3/2}
+\left(1-\frac{1}{d+1}\right) ^{3/2}
-2-\frac{3}{4(d+1)^{2}}
\right]
< \frac{1}{d+3/2}\log \frac{d+1}{d}.
\label{ineq}
\end{equation}
Now let $0 < t < 1$ such that $t = 1/(d+1)$, for every integer $d \geq 2$.
Inequality (\ref{ineq}) becomes
\begin{equation}
\frac{64}{3t^{4}}
\left[
(1+t)^{3/2}+(1-t)^{3/2}-2-\frac{3}{4}t^{2}
\right]
< \frac{-\log (1-t)}{t(1+t/2)}.
\label{eq:curvature-series-comparison}
\end{equation}
It therefore suffices to prove (\ref{eq:curvature-series-comparison}) for every $0 < t < 1$, which is stronger than the required statement.
The binomial expansion gives
\begin{equation*}
\frac{64}{3t^{4}}
\left[
(1+t)^{3/2}+(1-t)^{3/2}-2-\frac{3}{4}t^{2}
\right]
= \sum_{j = 0}^{\infty}\lambda_{j}t^{2j},
\qquad
\lambda_{j}
:= \frac{128}{3}\binom{3/2}{2j+4} > 0,
\end{equation*}
where
\begin{equation*}
\lambda_{0} = 1,
\qquad
\lambda_{1} = \frac{7}{24},
\qquad
\lambda_{2} = \frac{33}{256}.
\end{equation*}
For $j \geq 3$,
\begin{equation*}
\frac{\lambda_{j+1}}{\lambda_{j}}
= \frac{(4j+5)(4j+7)}{4(2j+5)(2j+6)}
< \frac{j+1}{j+2}.
\end{equation*}
Since $\lambda_{3} = 143/2048 < 1/12$, induction gives
\begin{equation}
\lambda_{j} < \frac{1}{3(j+1)},
\qquad
j \geq 3.
\label{eq:lambda-upper}
\end{equation}
Writing
\begin{equation*}
\int_{0}^{1}\frac{ds}{(1-st)(1+t/2)}
= \frac{-\log (1-t)}{t(1+t/2)}
= \sum_{m = 0}^{\infty}a_{m}t^{m}.
\end{equation*}
we obtain
\begin{equation*}
a_{m}
= \int_{0}^{1}\frac{s^{m+1}-(-1/2)^{m+1}}{s+1/2}\,ds.
\end{equation*}
Since $s+1/2 \leq 3/2$ on $[0,1]$,
\begin{equation*}
a_{2j}
> \frac{2}{3}\int_{0}^{1}s^{2j+1}\,ds
= \frac{1}{3(j+1)}.
\end{equation*}
Moreover,
\begin{equation*}
a_{2j+1}
= \sum_{k = 0}^{j}4^{-k}
\left[
\frac{1}{2j+2-2k}
-\frac{1}{2(2j+1-2k)}
\right]
\geq 0.
\end{equation*}
Finally,
\begin{equation*}
a_{0} = 1 = \lambda_{0},
\qquad
a_{2} = \frac{1}{3} > \frac{7}{24} = \lambda_{1},
\qquad
a_{4} = \frac{19}{120} > \frac{33}{256} = \lambda_{2}.
\end{equation*}
Together with (\ref{eq:lambda-upper}), these inequalities compare the two convergent series coefficient by coefficient and prove (\ref{eq:curvature-series-comparison}).
Substitution of $t = 1/(d+1)$ gives (\ref{eq:q-upper}).
\end{proof}

\begin{proof}[Proof of Theorem~\protect\ref{thm:main}]
Put
\begin{equation*}
C_{0}
:= \gamma +6\log 2-\frac{10}{3},
\qquad
E_{n}
:= 8\pi n^{3/2}(n+1)^{3/2}\Delta_{n}-\log n.
\end{equation*}
The desired lower bound is equivalent to $E_{n} > C_{0}$.
We first prove that $(E_{n})$ is strictly decreasing.
Set $d = n+1$.
Equations (\ref{eq:scaled-delta-increment}) and (\ref{eq:alpha-Y}), followed by the estimates of Lemmas~\ref{lem:alpha-upper} and~\ref{lem:q-upper}, give
\begin{align*}
8\pi q_{d-1}Y_{d}
& = 8\pi q_{d-1}d^{3/2}\alpha_{\mathbb{C}}(d) \\
& < \frac{64}{3}q_{d-1}\sqrt{d}(d+1/2) \\
& < \log \frac{d}{d-1}.
\end{align*}
The first inequality has the stated direction because $q_{d-1} > 0$.
Therefore
\begin{equation*}
E_{d}-E_{d-1}
= 8\pi q_{d-1}Y_{d}
-\log \left(1+\frac{1}{d-1}\right)
< 0.
\end{equation*}
It remains to identify the limit.
The leading term in (\ref{eq:delta-asymptotic}) gives
\begin{equation*}
E_{d-1}
= 8\pi (d-1)^{3/2}d^{3/2}\Delta_{d-1}
-\log (d-1)
= C_{0}+O\left(\frac{\log (d-1)}{d-1}\right).
\end{equation*}
Hence $E_{d-1} \to C_{0}$.
Since $(E_{d-1})$ decreases strictly to $C_{0}$, we have $E_{d-1} > C_{0}$ for every $d \geq 2$, proving the lower bound for $\Delta_d$ in (\ref{eq:main-bound}).
Finally,
\begin{equation*}
\ell_{d}-\frac{10}{3}
= \log d+C_{0}
> 0,
\end{equation*}
because $6\log 2 > 4$ and $\gamma > 0$.
This proves the positivity of the lower bound and establishes the two lower inequalities in (\ref{eq:main-bound}).

We next prove the upper bound.
The lower bound just established, together with $\alpha_{\mathbb C}(d) \to 8/(3\pi)$, gives
\begin{equation}
\alpha_{\mathbb C}(d) > \frac{8}{3\pi},
\qquad
Y_d
= d^{3/2}\alpha_{\mathbb C}(d)
> \frac{8}{3\pi}d^{3/2}.
\label{eq:Y-lower-for-delta-upper}
\end{equation}
For $d \geq 2$, the definition (\ref{qd}) gives
\begin{equation*}
q_{d-1}
= 2d^{3/2}
\sum_{j = 2}^{\infty}
\binom{3/2}{2j}\frac{1}{d^{2j}}.
\end{equation*}
Every coefficient in this series is positive, so retaining the term $j = 2$ gives
\begin{equation}
q_{d-1} > \frac{3}{64d^{5/2}}.
\label{eq:q-lower-for-delta-upper}
\end{equation}
Combining this estimate with (\ref{eq:Y-lower-for-delta-upper}) yields
\begin{equation}
8\pi q_{d-1}Y_d > \frac{1}{d}.
\label{eq:increment-lower-for-delta-upper}
\end{equation}
The exact increment identity (\ref{eq:scaled-delta-increment}) therefore gives, for every $d\geq2$,
\begin{equation*}
E_{d-1}-E_d
= \log\frac{d}{d-1}-8\pi q_{d-1}Y_d
< \log\frac{d}{d-1}-\frac{1}{d}.
\end{equation*}
Since $E_d \to C_0$, summing from $d = n+1$ to infinity gives
\begin{equation*}
E_n-C_0
= \sum_{d = n+1}^{\infty}(E_{d-1}-E_d)
< \sum_{d = n+1}^{\infty}
\left(\log\frac{d}{d-1}-\frac{1}{d}\right)
= H_n-\log n-\gamma.
\end{equation*}
By the definitions of $E_n$ and $C_0$, this is the upper inequality in (\ref{eq:main-bound}).

It remains to estimate the difference between the two bounds.
The definition of Euler's constant gives
\begin{equation*}
H_n-\log n-\gamma
= \sum_{k = n}^{\infty}
\left\{\log\left(1+\frac{1}{k}\right)-\frac{1}{k+1}\right\}.
\end{equation*}
For every $x > 0$,
\begin{equation*}
0 < \log(1+x)-\frac{x}{1+x} < \frac{x^2}{2(1+x)}.
\end{equation*}
Indeed, the derivative of the first difference is $x/(1+x)^2 > 0$.
The derivative of
\begin{equation*}
\frac{x^2}{2(1+x)}-\log(1+x)+\frac{x}{1+x}
\end{equation*}
is $x^2/(2(1+x)^2) > 0$, and both differences vanish at $x = 0$.
Taking $x = 1/k$ and summing gives
\begin{equation*}
0 < H_n-\log n-\gamma
< \frac{1}{2}\sum_{k = n}^{\infty}\frac{1}{k(k+1)}
= \frac{1}{2n}.
\end{equation*}
Substitution into the exact difference between the endpoints of (\ref{eq:main-bound}) proves (\ref{eq:delta-enclosure-width}) and completes the proof.
\end{proof}

\section{Proof of Corollary \protect\ref{CorAlpha}}

\label{sec:consequences}

To prove the lower bound in Corollary~\ref{CorAlpha}, and hence the lower bound in (\ref{eq:cut-improved-bounds}), use (\ref{eq:alpha-limit}) and (\ref{eq:main-bound}):
\begin{equation}
\alpha_{\mathbb{C}}(d)-\frac{8}{3\pi}
= \sum_{k = d}^{\infty}
\left(
\alpha_{\mathbb{C}}(k)-\alpha_{\mathbb{C}}(k+1)
\right)
> \frac{1}{8\pi}
\sum_{k = d}^{\infty}
\frac{\ell_{k}-10/3}{k^{3/2}(k+1)^{3/2}}.
\label{alphaSum}
\end{equation}
Let $C_{0} := \gamma +6\log 2-\frac{10}{3}$ and
\begin{equation*}
g(x)
:= \frac{\log x+C_{0}}{(x+1)^{3}},
\qquad
x \geq 1.
\end{equation*}
Since $C_{0} > 2/3$, $g$ is positive and strictly decreasing on $[1,\infty)$ and $k^{3/2}(k+1)^{3/2} < (k+1)^{3}$,
\begin{equation*}
\frac{\ell_{k}-10/3}{k^{3/2}(k+1)^{3/2}}
> g(k)
> \int_{k}^{k+1}g(x)\,dx.
\end{equation*}
Summing over $k \geq d$ and integrating by parts gives
\begin{align*}
\sum_{k = d}^{\infty}
\frac{\ell_{k}-10/3}{k^{3/2}(k+1)^{3/2}}
& > \int_{d}^{\infty}
\frac{\log x+C_{0}}{(x+1)^{3}}\,dx \\
& = \frac{\ell_{d}-10/3}{2(d+1)^{2}}
+\frac{1}{2}
\left\{
\log \left(1+\frac{1}{d}\right)
-\frac{1}{d+1}
\right\}.
\end{align*}
Now (\ref{alphaSum}) gives
\begin{equation*}
\alpha_{\mathbb{C}}(d)
> \frac{8}{3\pi}+\Lambda_{d},
\end{equation*}
which proves the lower bound.

For the upper bound, set
\begin{equation*}
a_k := H_k+6\log 2-\frac{10}{3}.
\end{equation*}
Since $\log 2 > 1/2$, we have $a_k > 0$.
The upper bound in Theorem~\ref{thm:main} and the limit (\ref{eq:alpha-limit}) give
\begin{align}
\alpha_{\mathbb{C}}(d)-\frac{8}{3\pi}
& = \sum_{k = d}^{\infty}\Delta_k
< \frac{1}{8\pi}\sum_{k = d}^{\infty}
\frac{a_k}{k^{3/2}(k+1)^{3/2}} \notag \\
& < \frac{1}{16\pi}\sum_{k = d}^{\infty}a_k
\left(\frac{1}{k^2}-\frac{1}{(k+1)^2}\right).
\label{eq:alpha-upper-from-delta}
\end{align}
Since $k\geq1$, all quantities in the coefficient comparison are positive.
After multiplying through by the positive denominators, the second inequality is equivalent to $2\sqrt{k(k+1)} < 2k+1$.
This follows from $(2k+1)^2-4k(k+1) = 1$.
For $N \geq d$, summation by parts gives
\begin{equation*}
\sum_{k = d}^{N}a_k
\left(
\frac{1}{k^2}-\frac{1}{(k+1)^2}
\right)
= \frac{a_d}{d^2}
+\sum_{k = d+1}^{N}\frac{a_k-a_{k-1}}{k^2}
-\frac{a_N}{(N+1)^2}.
\end{equation*}
Here $a_k-a_{k-1} = 1/k$ and $a_N/(N+1)^2\to0$.
Letting $N \to \infty$ in (\ref{eq:alpha-upper-from-delta}) therefore gives
\begin{equation}
\alpha_{\mathbb{C}}(d)-\frac{8}{3\pi}
< \frac{1}{16\pi}
\left\{
\frac{H_d+6\log2-10/3}{d^2}
+\sum_{k = d+1}^{\infty}\frac{1}{k^3}
\right\}.
\label{eq:alpha-harmonic-upper}
\end{equation}
The proof of Theorem~\ref{thm:main} established
\begin{equation}
H_d < \log d+\gamma+\frac{1}{2d}.
\label{eq:harmonic-elementary-upper}
\end{equation}
We also have
\begin{equation}
\sum_{k = d+1}^{\infty}\frac{1}{k^3}
< \frac{1}{2(d+1/2)^2}.
\label{eq:cubic-tail-elementary-upper}
\end{equation}
Indeed, strict convexity of $x \mapsto x^{-3}$ and Jensen's inequality on each interval $[k-1/2,k+1/2]$ give
\begin{equation*}
\frac{1}{k^3} < \int_{k-1/2}^{k+1/2}\frac{dx}{x^3}.
\end{equation*}
Summing over $k \geq d+1$ proves (\ref{eq:cubic-tail-elementary-upper}).
Combining (\ref{eq:alpha-harmonic-upper}), (\ref{eq:harmonic-elementary-upper}), and (\ref{eq:cubic-tail-elementary-upper}) yields
\begin{align*}
\alpha_{\mathbb{C}}(d)-\frac{8}{3\pi}
& < \frac{1}{16\pi}
\left\{
\frac{\ell_d-10/3}{d^2}
+\frac{1}{2d^3}
+\frac{1}{2(d+1/2)^2}
\right\} \\
& = \frac{\ell_d-17/6}{16\pi d^2}
+\frac{3d+1}{32\pi d^3(2d+1)^2}
= \Omega_d.
\end{align*}
This proves (\ref{CorAlphaBound}), completing the proof.

\section*{Acknowledgments}

P.P.\ thanks the students in his Spring 2026 graduate course on theoretical statistics and machine learning\footnote{\url{https://pratikpatil.io/teaching/sds391p6-s26}} for discussions on an open problem posed in Bandeira's lecture notes~\cite[Open Problem~1.2, pp.~16--17]{bandeira2016ten}.
L.D.A.\ was supported by FWF Project 10.55776/PAT8205923.

ChatGPT (GPT-5.6 Pro) assisted with numerical experiments, verification and simplification of proofs, exact symbolic computation of the leading asymptotic coefficients and inequalities (which have been explicitly computed up to $O\left(\frac{\log d}{d^{6}}\right)$ in experiments not documented in the manuscript, where all computations are done manually and explained in detail), literature searches, and proofreading.
The authors assume responsibility for all content.

\bibliographystyle{abbrvurl}
\bibliography{references_v33}

\end{document}